\theoremstyle{plain}
\newtheorem{thm}{Theorem}[section]
\newtheorem{lem}[thm]{Lemma}
\newtheorem{prop}[thm]{Proposition}
\theoremstyle{definition}
\begin{document}
\baselineskip 6.1mm

\title
{Small class number fields in the family $\mathbb{Q}(\sqrt{9m^2+4m})$}
%{A class number one criterion for real quadratic fields $\mathbb{Q}(\sqrt{9m^2 \pm 4m})$}
\author{Nimish Mahapatra, Prem Prakash Pandey, Mahesh Ram}

\address[Nimish Mahapatra]{Indian Institute of Science Education and Research, Berhampur, India.}
\email{}

\address[Prem Prakash Pandey]{Indian Institute of Science Education and Research, Berhampur, India.}
\email{premp@iiserbpr.ac.in}

\address[Mahesh Ram]{Indian Institute of Science Education and Research, Berhampur, India.}
\email{}

\subjclass[2010]{11R11; 11R29}

\date{\today}

\keywords{Imaginary Quadratic Field; Class Number; Ideal Class Group}

\begin{abstract}
We study the class number one problem for real quadratic fields $\mathbb{Q}(\sqrt{9m^2+ 4m})$, where $m$ is an odd integer. We show that for $m \equiv 1 \pmod 3$ there is only one such field with class number one and only one such field with class number two.
\end{abstract}

\maketitle{}

\section{Introduction}
The size of the class group of a number field $K$, called the class number of $K$, is an important object of study in algebraic number theory. The ring $\mathbb{O}_K$, of algebraic integers in $K$, is a unique factorization domain if and only if the class group of $K$ is trivial. It is known that there are only nine imaginary quadratic fields with class number one (see \cite{AB,HMS}). On the other hand, it was conjectured by Gauss that there are infinitely many real quadratic fields with class number one (see \cite{CFG}). This conjecture is still open. However, there are several partial/prelim results (see \cite{B1,B2,KKST,RM, BK1}). In particular, we mention some results showing finiteness of class number one real quadratic fields in special families of real quadratic fields.\\
% (see Biro's work on Yokoi and Chowla conjecture). In \cite{B1}, Biro proved the following theorem.
%
%\begin{thm}
%
%\end{thm}
%This was conjectured by Yokoi (see \cite{Y}). In \cite{B2}, Biro proved similar results for the following family.
%\begin{thm}

In \cite{B1}, proving a conjecture of Yokoi, Biro showed that, for any positive integer $m$ such that $m^2+4$ is square-free, the real quadratic field $\mathbb{Q}(\sqrt{m^2+4})$ has class number one precisely when $m \in \{1,3,5,7,13,17\}$. Along the similar lines, in \cite{B2} Biro proved a conjecture of Chowla; if $m$ is a positive integer such that $4m^2+1$ is square free then the real quadratic field $\mathbb{Q}(\sqrt{4m^2+1})$ has class number one if and only if $m \in \{1,2,3,5,7,13 \}$.\\

%\end{thm}
Another line of work, in these contexts, is finding necessary, and sufficient conditions for the class number of a real quadratic field to be one. We mention the work of Mollin \cite{RM} and Loboutin \cite{SL}. In \cite{BK1}, Byeon and Kim obtained one such criterion for real quadratic fields of Richaud-Degert type (R-D type); these are fields of the form $\mathbb{Q}(\sqrt{d}),$ where $d=n^2+r$ is square-free with $n,r \in \mathbb{Z}, -n<r \leq n$ and $r|4n$ (for definitions of R-D type fields in narrow sense and R-D type fields in wide sense we refer \cite{HY, HY1}). There are many results for small class number of R-D type fields (see \cite{BK1,BK2,KAM, JL}) as the fundamental units of such fields are explicitly known (\cite{GD}). \\
%In \cite{KAM}, authors obtain a conditional result on lower bound of class number of fields $\mathbb{Q}(\sqrt{n^2+1})$.\\

In this article we obtain lower bound on class number of fields which are not of R-D type. We study fields of the form $\mathbb{Q}(\sqrt{9m^2 + 4m})$, where $m$ is an odd integer. 
%extend the work of Byeon and Kim to obtain necessary and sufficient conditions for class number one for a family of real quadratic fields which are not of R-D type (make sure, as Biro calls it R-D type in 2016 paper). 
For any odd integer $m$, we put $D=9m^2+4m$ and $K=\mathbb{Q}(\sqrt{D})$. Let $\zeta_K(s)$ denote the Dedekind zeta function of the number field $K$ and $h_K$ denote its class number. We use the notation $\mathcal{N}$ to denote the number of distinct prime divisors of $m$ which are greater than $3$. We prove the following theorem in this article.\\
\begin{thm}\label{C2}
Let $m$ be any odd integer such that $m \not \equiv 2 \pmod 3$ and $D$ is square-free. Then $h_K=1$ if and only if $m \in \{-3,1,3\}$.
%the following lower bounds on class number $h_K$ holds
%\begin{center}
% 	 $h_K \geq
% 	\begin{cases}
% 	2+\mathcal{N} \qquad m \equiv 0 \pmod 3,\\
% 	3+\mathcal{N} \qquad m \equiv 1 \pmod 3,\\
% 	1+\mathcal{N} \qquad m \equiv 2 \pmod 3.
% 	\end{cases}	$
% \end{center}

%\begin{center}
%		$$\zeta_k(-1)=\frac{1}{120}
%	    (9m^3-6m^2+19m-6) $$
%	\end{center}
\end{thm}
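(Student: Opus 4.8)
\emph{Strategy.} The plan is to combine Gauss's genus theory with the explicit regular continued fraction (CF) expansion of $\sqrt D$; no analytic input is needed for this particular statement (the Dedekind zeta function enters only in the companion class number two result). Throughout, since $m$ is odd one checks $D\equiv 5\pmod 8$, so $D$ is the discriminant of $K$, $\mathcal O_K=\mathbb Z[(1+\sqrt D)/2]$, and $2$ is inert in $K$. For the ``if'' direction one simply notes that $m\in\{-3,1,3\}$ gives $D\in\{69,13,93\}$; in each case the Minkowski bound $\tfrac12\sqrt D<5$ leaves only the primes $2$ and $3$ to inspect, $2$ is inert, and $3$ ramifies with $(9+\sqrt D)/2$ a generator of norm $\pm 3$ of the prime above $3$. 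Hence $h_K=1$ in all three cases.

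\emph{Genus-theoretic reduction.} Now assume $h_K=1$. Since $\gcd(m,9m+4)=\gcd(m,4)=1$ and $D$ is squarefree, $m$ is squarefree, $9\nmid m$, and the primes ramified in $K$ are exactly the $r$ primes dividing $m$ together with the $s\ (\ge 1)$ primes dividing $9m+4$, all $t=r+s$ of them distinct. By genus theory the $2$-rank of the narrow class group of $K$ equals $t-1$, so $2^{\,t-1}$ divides $h_K^{+}\in\{h_K,2h_K\}=\{1,2\}$; hence $t\le 2$, so $r\le 1$ (in particular $\mathcal N\le 1$). Therefore $m\in\{\pm1\}$ or $m=\pm p$ for a prime $p$. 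Here $m=1$ ($D=13$) and $m=\pm 3$ are covered by the ``if'' direction, while $m=-1\equiv 2\pmod 3$ is excluded by hypothesis; so it remains to eliminate $m=p$ with $p\ge 7$, $p\equiv 1\pmod 3$, and $m=-p$ with $p\ge 5$, $p\equiv 2\pmod 3$ --- exactly the prime values $m\ne\pm3$ allowed by $m\not\equiv 2\pmod 3$.

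\emph{Killing the remaining cases via the prime $3$.} For those $m$ one has $3\nmid m$ and $D\equiv m\equiv 1\pmod 3$, so $3$ splits, $(3)=\mathfrak q\overline{\mathfrak q}$. Since $h_K=1$, $\mathfrak q=(\gamma)$ with $N(\gamma)=\pm 3$; writing $\gamma=(x+y\sqrt D)/2$ with $x\equiv y\pmod 2$ gives $x^2-Dy^2=\pm 12$, and if $x,y$ are both even this in fact gives $(x/2)^2-D(y/2)^2=\pm 3$ (in either case the solution may be taken primitive). In all the cases at hand $D\ge 205$, so for $N\in\{3,12\}$ we have $0<|N|<\sqrt D$, and a primitive solution of $u^2-Dv^2=N$ with $v\ge 1$ and $|N|<\sqrt D$ forces $u/v$ to be a convergent of the CF of $\sqrt D$; consequently $|N|$ equals one of the ``complete quotient denominators'' $Q_i$ occurring over one period. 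A direct induction through one period shows that the CF of $\sqrt D$ has period $10$ when $m>0$, namely $[\,3m;\overline{1,1,1,\tfrac{3m-1}{2},6,\tfrac{3m-1}{2},1,1,1,6m}\,]$, and period $8$ when $m=-n<0$, namely $[\,3n-1;\overline{3,\tfrac{3n-3}{2},1,4,1,\tfrac{3n-3}{2},3,6n-2}\,]$; along a period the $Q_i$ assume only the values $\{1,4,m,2m+1,4m-1,4m\}$ (respectively $\{1,4,n,2n-1,5n-4\}$). Since $m\ge 7$ (respectively $n\ge 5$) is odd, neither $3$ nor $12$ lies in this set --- a contradiction. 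Hence $h_K=1$ exactly for $m\in\{-3,1,3\}$.

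\emph{Main obstacle.} The genus-theoretic reduction and the finite verification in the ``if'' direction are routine. The crux is establishing the two CF expansions above uniformly in $m$: one must carry the continued fraction algorithm through a full period and track the auxiliary integers $P_i,Q_i$, checking that the partial quotients --- especially the $m$-dependent ones $\tfrac{3m-1}{2}$, $6$, $6m$ (and their negative-$m$ analogues) --- are exactly as stated, so that the set of $Q_i$-values is pinned down. This requires some care with the floor functions and with the small-$m$ regime where the period could shorten, though those $m$ are already disposed of in the ``if'' direction.
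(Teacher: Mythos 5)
Your proof is correct, but it follows a genuinely different route from the paper. The paper's argument is analytic: it reduces to $m$ prime via Hasse's criterion (its stand-in for your genus-theoretic step), then computes the partial zeta values $\zeta_K(-1,\mathfrak{C})$ and $\zeta_K(-1,\mathfrak{U})$ for the trivial class and the class of the prime above $3$ using Lang's formula and generalized Dedekind sums (Propositions \ref{P1}--\ref{P3}), and concludes $h_K\geq 2$ for $m>3$ because these values differ. You instead show the prime above $3$ is non-principal by elementary Diophantine means: genus theory bounds the number of ramified primes, and the classical fact that a primitive solution of $u^2-Dv^2=N$ with $|N|<\sqrt D$ comes from a convergent of $\sqrt D$ reduces everything to the list of $Q_i$ in one period of the continued fraction. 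I checked your claimed expansions and $Q_i$-sets symbolically (the key steps use only $m$ odd and $m\geq 3$, resp.\ $n\geq 3$); they are correct, so the deferred ``induction through one period'' is indeed just bookkeeping, and your treatment has the merit of handling $m<0$ explicitly, where the paper only asserts the argument is analogous with the other fundamental unit. What the paper's heavier machinery buys is the finer information used later: comparing zeta values of several classes yields the lower bounds $h_K\geq 3$ (Theorem \ref{C3}, via Zagier's formula for $\zeta_K(-1)$) and $h_K\geq 2+\mathcal N$ (Theorem \ref{C4}), and the coincidence $\zeta_K(-1,\mathfrak{C})=\zeta_K(-1,\mathfrak{Q})$ drives Theorem \ref{C5}; your continued-fraction method only certifies that one specific class is nontrivial. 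One small slip in your ``if'' direction: for $m=1$, $D=13\equiv 1\pmod 3$, so $3$ splits rather than ramifies and $(9+\sqrt{13})/2$ has norm $17$; this is harmless because the Minkowski bound $\tfrac12\sqrt{13}<2$ already forces $h_K=1$ there, while your ramification statement is only needed (and true) for $D=69,93$.
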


%{\bf (Note that we do need to assume that $9m^2\pm4m$ is square-free as this does not follow from $m$ being square-free.  Take $m=13$)} \\
%Analogously we also prove the following theorem.
%\begin{thm}\label{C1}
%Let $m$ be an odd square-free integer and $K=\mathbb{Q}(\sqrt{9m^2+4m})$. If $m \not \in \{...\}$ then the following lower bounds on class number $h_K$ holds
%\begin{center}
% 	 $h_K \geq
% 	\begin{cases}
% 	2+\mathcal{N} \qquad m \equiv 0 \pmod 3,\\
% 	3+\mathcal{N} \qquad m \equiv 1 \pmod 3,\\
% 	1+\mathcal{N} \qquad m \equiv 2 \pmod 3.
% 	\end{cases}	$
% \end{center}
%\end{thm}
%From these theorems we obtain following finiteness result on the number of real quadratic fields with class number one in the family $\mathbb{Q}(\sqrt{9m^2+4m})$ and $\mathbb{Q}(\sqrt{9m^2-4m})$. 
%\begin{cor}\label{CNT2}
%Let $m$ be an odd square-free integer. Then there are no class number one fields in the family $\mathbb{Q}(\sqrt{9m^2-4m})$.
%\end{cor}
%\begin{cor}\label{CNT1}
%Let $m$ be an odd square-free integer of the form $3k+1$. Then there are no class number one fields in the family $\mathbb{Q}(\sqrt{9m^2+4m})$. 
%\end{cor}
%
%
%{/bf Check if there are some finite exceptions }\\

We remark that the case $m>0$ of Theorem \ref{C2} follows from the main theorem proved in \cite{BL}. However, our approach differs from the approach of Biro and Lapkova. From our approach, we obtain better lower bound on class number of $\mathbb{Q}(\sqrt{9m^2+4m})$ in the case $m\equiv 1 \pmod 3$. To this effect we prove the following theorem.
%When $m \equiv 1 \pmod 3$, then there are only finitely many fields of the form $\mathbb{Q}(\sqrt{9m^2+4m})$ with class number $2$.
\begin{thm}\label{C3}
Let $m \equiv 1 \pmod 3$ be such that $D=9m^2+4m$ is square-free. If $m \not \in \{-5,1\}$ then $h_K \geq 3$.
\end{thm}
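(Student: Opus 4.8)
I will prove the bound by producing a single ideal class of order at least $3$, namely the class of a prime lying over the rational prime $3$. Since $m$ is odd, $D\equiv 9+4m\equiv 5\pmod 8$, so $2$ is inert and $\mathcal O_K=\mathbb Z\big[(1+\sqrt D)/2\big]$; since $m\equiv1\pmod3$, $D\equiv m\equiv1\pmod3$, so $3$ splits, say $(3)=\mathfrak p_3\overline{\mathfrak p}_3$ with $N(\mathfrak p_3)=3$, and $3\nmid D$. It is enough to show that $[\mathfrak p_3]$ has order $\ge 3$, i.e. that neither $\mathfrak p_3$ nor $\mathfrak p_3^2$ is principal; this gives $h_K\ge 3$. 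As the ideals of norm $3$ are $\mathfrak p_3,\overline{\mathfrak p}_3$ and those of norm $9$ are $\mathfrak p_3^2,(3),\overline{\mathfrak p}_3^2$, and conjugation swaps $\mathfrak p_3$ and $\overline{\mathfrak p}_3$, this reduces to two Diophantine assertions: (i) $u^2-Dv^2=\pm12$ has no solution with $u\equiv v\pmod2$; and (ii) $u^2-Dv^2=\pm36$ has no solution with $u\equiv v\pmod2$ and $3\nmid v$ (the last condition, equivalent to $3\nmid u$ since $3\nmid D$, serving only to exclude the principal ideal $(3)$).

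For (i) and (ii) I would compute the continued fraction of $\sqrt D$. A direct evaluation of the complete quotients $(P_k+\sqrt D)/Q_k$ (the floor computations using that $m$ is odd) shows that the expansion is periodic and that the set of partial denominators $Q_k$ occurring over one period is $\{1,4,m,2m+1,4m-1,4m\}$ for $m>0$ and $\{1,4,|m|,2|m|-1,5|m|-4\}$ for $m<0$. Invoking the classical criterion that a coprime pair of positive integers with $0<|x^2-Dy^2|<\sqrt D$ is a convergent $p_k/q_k$ of $\sqrt D$ with $|x^2-Dy^2|=Q_{k+1}$: a solution of (i), divided by $g=\gcd(u,v)$ (which divides $2$, as $g^2\mid12$), produces some $Q_k\in\{3,12\}$ provided $\sqrt D>12$; a solution of (ii), divided by $g$ (again $g\mid2$, since $3\nmid v$ forbids $3\mid g$), produces some $Q_k\in\{9,36\}$ provided $\sqrt D>36$.

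Scanning the two lists, for $m\equiv1\pmod6$ none of $3,12,36$ is a $Q_k$ unless $3\mid m$ (excluded from the theorem) or $m=1$ (excluded), while $9$ is a $Q_k$ exactly for $m=-5$ (through $2|m|-1=9$) — the single genuine exception. Hence $h_K\ge3$ for every $m\equiv1\pmod6$ with $D$ square-free, $m\notin\{1,-5\}$, and $D>1296$. The inequality $9m^2+4m>1296$ fails, among the relevant values, only for $m=7$ $(D=469)$ and $m=-11$ $(D=1045)$; for these one verifies (i) as above (here $\sqrt D>12$ still holds) and settles (ii) by a finite search, bounding $v$ through the fundamental solution of $x^2-Dy^2=1$, or simply computes $h_K$ from the continued fraction — either way $h_K\ge3$. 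One may note that when $m$ has several prime factors, genus theory together with $N(\varepsilon_K)=+1$ (the period being even) already forces a larger bound, so the content lies in the few-prime-factor cases, handled uniformly by $[\mathfrak p_3]$.

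The hard part is the converse implication in (i)/(ii): deducing "$Q_k$ equals a prescribed small integer" from "$\mathfrak p_3^{\,j}$ is principal." This requires the exact continued fraction of $\sqrt D$ (computed carefully in both sign regimes), honest tracking of the factor $2$ arising because $\mathcal O_K\ne\mathbb Z[\sqrt D]$ and of common factors of $u$ and $v$, and a separate finite check for the small discriminants with $\sqrt D\le 36$ where the convergent criterion is unavailable. The remaining steps are routine.
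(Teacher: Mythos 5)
Your proposal is correct in substance, but it takes a genuinely different route from the paper. The paper's proof never touches continued fractions: it distinguishes the trivial class $\mathfrak{C}$ from the class $\mathfrak{U}$ of a prime above $3$ by comparing the partial values $\zeta_K(-1,\mathfrak{C})$ and $\zeta_K(-1,\mathfrak{U})$ computed via Lang's formula (Propositions \ref{P1} and \ref{P3}), and then forces a third class by bounding $\zeta_K(-1)$ from below with Zagier's divisor sum (Theorem \ref{DZ}), counting the trivial divisors of $-s^2+3ms+m$, the divisor $3$ when $s\not\equiv 0\pmod 3$ (this is where $m\equiv 1\pmod 3$ enters), and an extra contribution at $t=s=m$; the bound exceeds $\zeta_K(-1,\mathfrak{C})+\zeta_K(-1,\mathfrak{U})$ for $m>4$. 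You instead show that the class of $\mathfrak{p}_3$ has order at least $3$ by excluding primitive solutions of $x^2-Dy^2\in\{\pm 3,\pm 12,\pm 9,\pm 36\}$ via the convergent criterion. Your asserted period data are in fact correct: for odd $m>0$ the $Q_k$ over one period are $4m,\,2m+1,\,4m-1,\,4,\,m,\,4,\,4m-1,\,2m+1,\,4m,\,1$, and for odd $m<0$, with $n=|m|$, they are $2n-1,\,4,\,5n-4,\,n,\,5n-4,\,4,\,2n-1,\,1$; the floor computations use only that $m$ is odd, so the scan isolating $m=1$ and $m=-5$ goes through, your handling of the factor $2$ coming from $\mathcal{O}_K=\mathbb{Z}[(1+\sqrt{D})/2]$ and the exclusion of $(3)$ via $3\nmid v$ are sound, and the residual cases $m=7$ and $m=-11$ (where $\sqrt{D}\le 36$) are legitimately settled by a finite search bounded by the fundamental unit, or by Table \ref{tab:2}, which gives $h_K=3$ and $4$. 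What your approach buys: it is more elementary (no generalized Dedekind sums or special zeta values), it treats $m>0$ and $m<0$ on the same footing, and generically it gives the stronger conclusion that the class group contains an element of order at least $3$; what it costs is exactly the period computation you defer, which is the real content of your argument and must be written out (it is routine, as indicated above), plus the separate small-discriminant checks. One caveat: your closing aside on genus theory is dispensable and, as stated, only yields an even class number when $D$ has three prime divisors, so it should not be leaned on.
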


In case $m$ has large number of prime factors then the class number $h_K$ is large. We record this in the following theorem.
\begin{thm}\label{C4}
If $m$ has three or more prime factors and $D$ is square-free then 
\begin{center}
 	 $h_K \geq
 	\begin{cases}
 	1+\mathcal{N} \qquad if\quad m \equiv 2 \pmod 3,\\
 	2+\mathcal{N} \qquad otherwise,\\
 	\end{cases}	$
 \end{center}
 where $\mathcal{N}$ is number of prime factors of $m$ bigger than $3$.
\end{thm}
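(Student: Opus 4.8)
The plan is to exhibit, under the hypotheses, enough explicitly described ideal classes. I would first record the preliminaries: since $m$ is odd one has $D=9m^2+4m\equiv 1\pmod 4$, so $\operatorname{disc}(K)=D$, $D$ is odd, and $\mathcal O_K=\mathbb Z[\tfrac{1+\sqrt D}{2}]$; writing $D=m(9m+4)$ with $\gcd(m,9m+4)=1$, the square-freeness of $D$ forces both $m$ and $9m+4$ to be square-free and every prime $p\mid m$ to ramify in $K$. For such $p$ let $\mathfrak p_p$ be the prime above it, so $\mathfrak p_p^2=(p)$ and $N\mathfrak p_p=p$.

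The engine will be the norm estimate: \emph{there is no $\gamma\in\mathcal O_K$ with $|N(\gamma)|$ square-free and $1<|N(\gamma)|<|m|$.} To prove it, write $\gamma=\tfrac{a+b\sqrt D}{2}$ with $a\equiv b\pmod 2$, so that $a^2-Db^2=\pm 4|N(\gamma)|$; the case $b=0$ makes $|N(\gamma)|$ a perfect square, and for $b\neq 0$ a direct computation — using $\sqrt D = 3|m|+O(1)$ together with the identity $(9m+2)^2=9D+4$, which controls the integers nearest to $b\sqrt D$ — shows that $|N(\gamma)|$ is either a perfect square or at least $|m|$. In particular it is never square-free and strictly between $1$ and $|m|$.

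Granting this, I would argue as follows. Assume $m$ has at least three prime factors. For any two distinct primes $p,q$ dividing $m$ we have $pq<|m|$, so $\mathfrak p_p\mathfrak p_q$ (of norm $pq$, square-free) is non-principal, giving $[\mathfrak p_p]\neq[\mathfrak p_q]$ in the ideal class group $Cl(K)$; similarly $[\mathfrak p_p]\neq 1$ since $p<|m|$. Thus the classes of $\mathfrak p_{p_1},\dots,\mathfrak p_{p_\mathcal N}$, attached to the prime divisors $p_1,\dots,p_\mathcal N>3$ of $m$, are $\mathcal N$ pairwise distinct non-trivial classes; with the trivial class this yields $h_K\ge\mathcal N+1$ and settles the case $m\equiv 2\pmod 3$. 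If $m\not\equiv 2\pmod 3$, then $D\equiv m\pmod 3$ is $\equiv 0$ (when $3\mid m$, so $3$ ramifies) or $\equiv 1$ (so $3$ splits), and in either case there is a prime ideal $\mathfrak p_3$ with $N\mathfrak p_3=3$. Since $m$ has at least three prime factors, $3<|m|$ and $3p_i<|m|$ for each $i$, so $\mathfrak p_3$ and every $\mathfrak p_3\mathfrak p_{p_i}$ are non-principal; hence $[\mathfrak p_3]$ is non-trivial and distinct from each $[\mathfrak p_{p_i}]$. Adjoining it gives $\mathcal N+2$ distinct classes, so $h_K\ge\mathcal N+2$.

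The hard part is the norm estimate, specifically ruling out small values of $|a^2-Db^2|$ for \emph{every} $b$: this amounts to showing that the partial quotients of $\sqrt D$ produce no ideal of square-free norm in the open interval $(1,|m|)$ before one reaches the fundamental unit, and it is the only step that uses the precise shape $D=9m^2+4m$ rather than merely $D\approx 9m^2$. Everything after it is elementary bookkeeping with the divisibility inequalities among the prime factors of $m$.
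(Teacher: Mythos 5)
Your route is genuinely different from the paper's. The paper distinguishes the classes $\mathfrak{C}$, $\mathfrak{U}$ and the classes $\mathfrak{P}_i$ of the ramified primes $p_i>3$ by computing the partial zeta values $\zeta_K(-1,\cdot)$ via Lang's formula (Propositions \ref{P1}, \ref{P2}, \ref{P3} and \ref{P5}) and checking that the resulting polynomial identities in $m$ and $p$ cannot hold; you instead exploit that the ramified classes are $2$-torsion and rule out principality of the small products $\mathfrak{p}_{p_i}$, $\mathfrak{p}_{p_i}\mathfrak{p}_{p_j}$, $\mathfrak{p}_3\mathfrak{p}_{p_i}$ by a norm gap. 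Your bookkeeping is correct (including the case where $3$ splits, since $[\mathfrak{p}_{p_i}]^{-1}=[\mathfrak{p}_{p_i}]$), and your approach is more elementary and more structural than the zeta-value computations, which in turn are what the paper reuses for Theorems \ref{C3} and \ref{C5}. The one real caveat is that your ``engine'' is only gestured at: the claim that no $\gamma\in\mathbb{O}_K$ has square-free norm strictly between $1$ and $|m|$ is exactly the content of the lemma, and ``a direct computation with $\sqrt{D}=3|m|+O(1)$'' only disposes of small $b$; for general $b$ you must pass to convergents, i.e.\ show that every primitive ideal of norm less than $\sqrt{D}/2$ is reduced and compute the principal cycle. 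This does work: for $m>0$ the continued fraction of $(1+\sqrt{D})/2$ is $[\tfrac{3m+1}{2};\overline{3,3m}]$, so the reduced principal ideals have norms $1$ and $m$ only, while for $m<0$ the cycle has norms $1$, $|m|$ and $2|m|-1$, so the estimate still holds but the computation is different — your $O(1)$ gloss hides this sign distinction, which must be treated since the theorem (and the paper's tables) include negative $m$. With that lemma written out, your proof is complete.
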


%{\bf If these are all the results we can prove then we give an outline of the paper now} I was trying to prove that the polynomial $X^2-3mX-m$ takes certain composite/prime values with $X \in \{1, \ldots, m-1\}$ but as of now there is some lacuna.\\

For integers $d$ and $N$ the Diophantine equation $x^2-dy^2=N$ is widely studied, under the popular name `Pell$'$s equations'. The necessary and sufficient conditions for the solubility of such equations are known for long. We refer the article of Mollin \cite{RM1} and the references therein. Employing the techniques used in the study of the above problems, we present a family of Pell's equations and prove that they are always soluble.

\begin{thm}\label{C5}
Let $m$ be an odd positive square-free integer such that $q=9m+4$ is a prime. Then for $D=m(9m+4)$, the Diophantine equation $x^2-Dy^2=4q$ is always soluble.
\end{thm}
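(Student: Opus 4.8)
The plan is to exhibit an explicit element of norm $4q$ in the ring of integers of $K=\mathbb{Q}(\sqrt{D})$, where $D=m(9m+4)=9m^2+4m$ and $q=9m+4$ is prime. The natural candidate comes from the factorization $D=mq$: the number $3m+\sqrt{D}$ has norm $9m^2-D=9m^2-(9m^2+4m)=-4m$, which is not quite what we want, but it suggests looking at combinations involving $\sqrt{D}$ and the rational integer $3m$. The key observation is that since $q\mid D$, the prime $q$ ramifies in $K$, so $(q)=\mathfrak{q}^2$ for a prime ideal $\mathfrak{q}$, and one expects $\mathfrak{q}^2=(q)$ to be principal generated by an element closely tied to $\sqrt{D}$; indeed $q\mid D$ gives $D\equiv 0\pmod q$, and modulo $q$ we have $9m\equiv -4$, i.e. $m\equiv -4\cdot 9^{-1}$. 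First I would write down the candidate solution explicitly: I expect $x=9m+2=q-2$ (or a small variant) together with $y=1$, or perhaps $x,y$ of size comparable to $\sqrt{q}$, to satisfy $x^2-Dy^2=4q$. A quick check: with the guess related to the half-integer $(3m+\sqrt D)$, note $\left(\tfrac{x+y\sqrt D}{2}\right)$ should have norm $q$, i.e.\ $x^2-Dy^2=4q$; trying $y=1$ forces $x^2=D+4q=9m^2+4m+36m+16=9m^2+40m+16$, which is not obviously a perfect square, so $y=1$ is likely wrong and one must take $y=3$: then $x^2=9D+4q=81m^2+36m+36m+16=81m^2+72m+16=(9m+4)^2-4m\cdot\! ?$—recompute: $81m^2+72m+16$; and $(9m+4)^2=81m^2+72m+16$. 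Hence $x=9m+4=q$, $y=3$ works, giving $q^2-9D=q^2-9mq=q(q-9m)=q\cdot 4=4q$.

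So the explicit identity is
\begin{equation}
q^2 - D\cdot 3^2 = (9m+4)^2 - 9(9m^2+4m) = 81m^2+72m+16-81m^2-36m = 36m+16 = 4(9m+4) = 4q,
\end{equation}
which immediately proves solubility with $(x,y)=(q,3)=(9m+4,3)$. Thus the theorem reduces to this one-line verification, and strictly speaking no hypothesis beyond $D=m(9m+4)$ is needed for \emph{this} particular solution to exist — the primality of $q$ and square-freeness of $m$ are presumably included either because they are the standing hypotheses under which $D$ defines the relevant quadratic field cleanly, or because the intended statement is really about the \emph{primitive} solubility of the norm-form equation $X^2-DY^2=q$ over the ring of integers (with $X,Y$ possibly half-integers), where $q$ being prime ensures the corresponding ideal $\mathfrak q$ is genuinely a prime ideal rather than a proper power.

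Given how short the core computation is, I would organize the write-up as follows: first note that $D=9m^2+4m$ and substitute $x=9m+4$, $y=3$ directly into $x^2-Dy^2$ and simplify to $4q$; then, if the paper wants the stronger statement, observe that $(x+y\sqrt D)/2$ need not be an algebraic integer, but $x^2-Dy^2=4q$ is exactly the assertion that $x^2\equiv Dy^2\pmod{4q}$ has the stated solution, so the equation is soluble in positive integers. The main (very minor) obstacle is simply presentation: deciding whether the intended reading of "soluble" is over $\mathbb Z$ (trivial, as above) or whether the theorem is a placeholder for a sharper ideal-theoretic statement about $\mathfrak q$ being principal in a suitable order, in which case one additionally invokes that $q$ prime forces $\mathfrak q$ to be the unique prime above $q$ and checks that the element $\tfrac{(9m+4)+3\sqrt D}{\gcd}$ generates it. Either way, no serious difficulty arises; the content is the lucky algebraic identity $(9m+4)^2-9D=4(9m+4)$.
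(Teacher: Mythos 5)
Your proposal is correct: the identity $q^{2}-9D=(9m+4)^{2}-9(9m^{2}+4m)=36m+16=4q$ shows that $(x,y)=(q,3)=(9m+4,3)$ solves $x^{2}-Dy^{2}=4q$, so the theorem as stated follows from a one-line verification. This is, however, a genuinely different route from the paper's. The paper deduces solubility from principality of the ramified prime $\mathfrak{q}$ above $q$: it computes the partial zeta values $\zeta_K(-1,\mathfrak{C})$ and $\zeta_K(-1,\mathfrak{Q})$ via Lang's formula and a lengthy evaluation of the generalized Dedekind sums $S^{2}(2q-1,3q)$ and $S^{3}(2q-1,3q)$ (Proposition \ref{P6} and the Appendix), finds that the two values coincide, invokes Lang's result that equality of partial zeta values at $-1$ forces equality of the ideal classes, concludes $\mathfrak{q}=\bigl\langle (x+y\sqrt{D})/2\bigr\rangle$ is principal, and takes norms. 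Your argument bypasses all of this; moreover it actually recovers the paper's intermediate conclusion for free, since $\frac{q+3\sqrt{D}}{2}$ is an algebraic integer (both $q$ and $3$ are odd and $D\equiv 1\pmod 4$) of norm $\frac{q^{2}-9D}{4}=q$, and the unique ideal of norm $q$ is $\mathfrak{q}$, so $\mathfrak{q}$ is principal by inspection — the hypotheses that $m$ is square-free and $q$ prime are only needed for this ideal-theoretic reading, not for the integer solution itself. What each approach buys: yours shows the statement is elementary and makes the result essentially unconditional; the paper's computation is really a showcase of the partial-zeta-value/Dedekind-sum technique that drives Theorems \ref{C2}--\ref{C4}, and it is that machinery, rather than the solubility statement, which carries the generalizable content.
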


In Section 2 we recall some results on special values of Dedekind zeta function and develop preliminaries required for the proofs. In section 3 we give all the proofs. Section 4 presents some computations performed by us with the help of Pari.\\

\section{Preliminaries}
We begin this section by mentioning two methods to compute special values of Dedekind zeta function associated to a real quadratic field. By specializing Siegel's formula (see \cite{CS}) for $\zeta_K(1-2n)$, in \cite{DZ}, Zagier obtained the value of $\zeta_K(-1)$ for any real quadratic field. We mention this in the following theorem.

\begin{thm}\label{DZ}
(Zagier \cite{DZ}) Let $K$ be a real quadratic field with discriminant $D$. Then 
$$ \zeta_K(-1)=\frac{1}{60} \sum_{\substack{ |t|< \sqrt{D} \\t^2 \equiv D\ (\bmod 4)}} \sigma\left( \frac{D-t^2}{4}\right),   $$
where $\sigma(n)$ denotes the sum of divisors of $n$.
\end{thm}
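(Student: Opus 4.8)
The plan is to obtain the formula by combining the factorization of $\zeta_K$ with the recognition of both sides as Fourier coefficients of a single weight-$5/2$ modular form, which is exactly the mechanism underlying Siegel's computation \cite{CS}. First I would use $\zeta_K(s)=\zeta(s)L(s,\chi)$, where $\chi$ is the Kronecker character attached to the discriminant $D$. Evaluating at $s=-1$ and inserting $\zeta(-1)=-\tfrac{1}{12}$ reduces the claim to the identity
$$\sum_{\substack{|t|<\sqrt{D}\\ t^{2}\equiv D\,(\bmod 4)}}\sigma\!\left(\frac{D-t^{2}}{4}\right)=-5\,L(-1,\chi),$$
so everything comes down to expressing $L(-1,\chi)$ through the indicated divisor sum. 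Because $D$ is a fundamental discriminant it is not a perfect square, so no boundary term $t^{2}=D$ arises and every summand has argument $\ge 1$.

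Second, I would realise the left-hand side as a Fourier coefficient. With $\theta(\tau)=\sum_{t\in\mathbb{Z}}q^{t^{2}}$ of weight $1/2$ and $\tilde E(\tau)=\sum_{n\ge 1}\sigma(n)q^{n}$, the $q^{D}$-coefficient of the product $\theta(\tau)\,\tilde E(4\tau)$ is precisely $\sum_{t}\sigma\big((D-t^{2})/4\big)$, since $t^{2}+4n=D$ forces $n=(D-t^{2})/4$; note that a nonzero coefficient requires $D\equiv 0,1\ (\bmod 4)$, so the series already sits in the correct plus-space. Thus the generating series $\sum_{N\ge 0}S(N)q^{N}$ of these divisor sums is, up to the quasimodular anomaly of the weight-$2$ Eisenstein series, a form of weight $5/2$ on $\Gamma_{0}(4)$. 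On the other side, the Cohen--Eisenstein series $\sum_{N\ge 0}H(2,N)q^{N}$ is a genuine weight-$5/2$ form on $\Gamma_{0}(4)$ whose coefficient at a positive fundamental discriminant $D$ equals $L(-1,\chi)$, with constant term $H(2,0)=\zeta(-3)=\tfrac{1}{120}$.

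Third, I would match the two forms. Both are Eisenstein, and the Eisenstein part of the relevant plus-space is of very small dimension, so once the constant terms and finitely many low coefficients are compared the two series must be proportional; this pins the constant to $-5$ and gives $S(N)=-5\,H(2,N)$ for all $N$. Specialising to our fundamental $D$ yields $S(D)=-5L(-1,\chi)$ and hence the theorem. Alternatively, one may simply invoke Siegel's realisation of $\zeta_K(-1)$ as a Fourier coefficient of a weight-$2$ Hilbert Eisenstein series over $K$ and read off the divisor sum from its expansion along the codifferent; the specialisation $n=1$ of the general formula for $\zeta_K(1-2n)$ then produces exactly the stated shape, which is the route taken in \cite{DZ}.

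The main obstacle, in the self-contained route, is the bookkeeping forced by the non-holomorphy of $E_{2}$: one must pass to the almost-holomorphic $E_{2}^{*}$ and apply holomorphic projection so that the product acquires the correct constant term $\tfrac{1}{120}$ and becomes honestly modular, then justify the finite-dimensional matching by a dimension count, all while tracking every numerical constant so that the proportionality factor comes out to exactly $-5$. The last point needing care is the clean restriction to fundamental discriminants, where Cohen's coefficient loses its divisor-twist factor and equals $L(-1,\chi)$ outright.
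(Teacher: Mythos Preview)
The paper does not prove this theorem; it is quoted as a known result of Zagier, with the single remark that it arises ``by specializing Siegel's formula for $\zeta_K(1-2n)$'' from \cite{CS,DZ}. So there is no argument in the paper to compare your proposal against beyond that one-line attribution.

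That said, your sketch is a coherent outline of a standard proof. The second route you mention at the end---restricting Siegel's weight-$2$ Hilbert Eisenstein series over $K$ to the diagonal and reading off the $n=1$ case of the formula for $\zeta_K(1-2n)$---is exactly what the paper's attribution points to, and is the approach actually taken in \cite{DZ}. Your primary route via the product $\theta(\tau)\tilde E(4\tau)$ and comparison with the Cohen--Eisenstein series $\mathcal{H}_2$ is a genuine alternative (essentially Cohen's viewpoint); it trades the arithmetic of the Hilbert modular setting for elementary half-integral-weight bookkeeping on $\Gamma_0(4)$. The main caveat you already flagged is real: $\tilde E$ is only the holomorphic part of the quasimodular $E_2$, so $\theta\cdot\tilde E(4\tau)$ is not modular on the nose, and one must either invoke holomorphic projection or add the correct non-holomorphic correction (this is precisely the term that produces Hurwitz--Kronecker class numbers rather than naive divisor sums in the general Cohen formula). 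Provided that correction is carried through and the Eisenstein subspace of $M_{5/2}^{+}(\Gamma_0(4))$ is checked to be one-dimensional, the constant $-5$ falls out from matching the constant term $\zeta(-3)=1/120$, and the specialisation to a fundamental $D$ is clean since then $H(2,D)=L(-1,\chi_D)$ with no extra divisor twist.
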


However, there is another method, due to H. Lang (see \cite{HL}), for computing special values of partial Dedekind zeta function of an ideal class in real quadratic fields. Let $K=\mathbb{Q}(\sqrt{d})$ be a real quadratic field with discriminant $D$ and fundamental unit $\epsilon_d$. Let $\mathfrak{A}$ be an ideal class in the class group of $K$, $\mathfrak{a}$ be an integral ideal in the class $\mathfrak{A}^{-1}$ with an integral basis $\{r_1,r_2\}$. Let $r_1',r_2'$ denote the conjugate of $r_1,r_2$ respectively and let $$\delta(\mathfrak{a})=r_1r_2'-r_1'r_2.$$ 
Let $M=$ $\begin{bmatrix} a & b \\ c & d \end{bmatrix}$ be an integral matrix satisfying
$$ \epsilon \begin{bmatrix}
r_1 \\ r_2
\end{bmatrix}= M  \begin{bmatrix}
r_1 \\ r_2
\end{bmatrix}.$$

Then the partial zeta value $\zeta_K(-1,\mathfrak{A})$, associated to the ideal class $\mathfrak{A}$, was obtained by H. Lang (see \cite{HL}) in terms of the parameters $\delta(\mathfrak{a}),a,b,c,d$ and generalized Dedekind sums.

\begin{thm}\label{HL}
(Lang \cite{HL}) With the above notations, we have \\
\begin{multline}\label{HLF}
\zeta_K(-1,\mathfrak{A})=\frac{sgn\delta(\mathfrak{a})r_2r_2'}{360N(\mathfrak{a})c^3}\{(a+d)^3-6(a+d)N(\epsilon)-240c^3(sgnc)S^3(a,c)\\+180ac^3(sgnc)S^2(a,c)-240c^3(sgnc)S^3(d,c)+180dc^3(sgnc)S^2(d,c)\},
\end{multline}
where $N(\mathfrak{a})$ is the norm of the ideal $\mathfrak{a}$, and $S^i(a,c)=S^i_4(a,c)$ denote the generalised Dedekind sum.
\end{thm}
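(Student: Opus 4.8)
This is a result of H.~Lang quoted from \cite{HL}; I outline the method by which such a formula is established, in the spirit of Siegel's computation \cite{CS}. The plan is to translate the partial zeta sum into a lattice-point sum over a fundamental domain for the action of the units, and then to read off the value at $s=-1$ from the explicit values at non-positive integers of a Barnes--Shintani double zeta function, in which the generalized Dedekind sums appear as the periodic boundary terms.

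\emph{Parametrizing the ideals.} Since $\mathfrak{a} \in \mathfrak{A}^{-1}$ is integral, every integral ideal $\mathfrak{b} \in \mathfrak{A}$ is of the form $\mathfrak{b} = (\mu)\mathfrak{a}^{-1}$ with $\mu \in \mathfrak{a}$, unique up to a unit, and $N(\mathfrak{b}) = |N(\mu)|/N(\mathfrak{a})$. Writing $\mu = x r_1 + y r_2$ identifies $\mathfrak{a}$ with $\mathbb{Z}^2$, and
\[
N(x r_1 + y r_2) = (x r_1 + y r_2)(x r_1' + y r_2')
\]
is an integral binary quadratic form whose discriminant is exactly $\delta(\mathfrak{a})^2$, with leading coefficients $r_1 r_1'$ and $r_2 r_2'$. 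Multiplication by the fundamental unit $\epsilon$ is the linear map $M$, so that
\[
\zeta_K(s,\mathfrak{A}) = N(\mathfrak{a})^{s} \sum_{(x,y)} |N(x r_1 + y r_2)|^{-s},
\]
where $(x,y)$ runs over representatives of the nonzero lattice points modulo $\langle M\rangle$ and $\pm 1$, i.e.\ over a fundamental domain for the hyperbolic matrix $M$ acting on the first quadrant; the sign $\mathrm{sgn}\,\delta(\mathfrak{a})$ records the orientation of the chosen basis.

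\emph{Extracting the special value.} Next I would decompose the relevant quadrant into finitely many simplicial Shintani cones, each a translate of a fundamental domain for $\langle M\rangle$. On each cone the contribution is a Barnes double zeta function, whose values at non-positive integers are given in closed form by Bernoulli polynomials. Assembling the cones and using $\det M = N(\epsilon)$ and $\mathrm{tr}\,M = a+d$ produces the polynomial part $(a+d)^3 - 6(a+d)N(\epsilon)$, while the periodic contributions along the rays, after applying the transformation and reciprocity laws for higher Dedekind sums, collapse into $S^i_4(a,c)$ and $S^i_4(d,c)$; the factor $r_2 r_2'/(N(\mathfrak{a})c^3)$ is the accumulated cusp-width data and $\mathrm{sgn}\,c$ fixes the orientation of each ray.

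The main obstacle is this last identification. Matching the periodic cotangent-type sums that emerge from the Barnes special values with the generalized Dedekind sums in precisely the stated normalization --- and simultaneously pinning down every numerical constant ($360$, $240$, $180$) and every sign factor --- demands careful bookkeeping of the $c^3$ denominators coming from the cusp widths together with repeated use of reciprocity for the sums $S^i_4$. Once these are reconciled, the formula \eqref{HLF} follows.
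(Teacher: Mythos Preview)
The paper does not prove this theorem at all; it is stated as a result of Lang and simply quoted from \cite{HL} with no argument given, so there is no ``paper's own proof'' against which to compare your proposal.

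As for the proposal itself: what you have written is a plausible high-level roadmap of the Siegel--Lang--Shintani approach (parametrize integral ideals in the class by lattice points modulo units, decompose into simplicial cones, evaluate the resulting Barnes double zeta functions at $s=-1$ via Bernoulli polynomials, and identify the boundary terms with generalized Dedekind sums), and this is indeed the circle of ideas from which such formulas come. However, the proposal is explicitly a sketch rather than a proof: you yourself flag that the ``main obstacle'' --- matching the periodic sums with the $S^i_4$ in the stated normalization and verifying every constant and sign --- is not carried out. That identification is the entire computational content of Lang's result, so without it the outline does not constitute a proof. If the goal were to actually supply a proof here, you would need to either reproduce Lang's cone decomposition and Dedekind-sum bookkeeping in full, or cite \cite{HL} (or a modern exposition) for the details. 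Since the present paper only uses the formula as a black box, simply citing Lang, as the paper does, is the appropriate treatment.
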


%To determine the value of $a,b,c,d$, Byeon and Kim (see \cite{BK1}) proved the following lemma. 
%{\bf It seems we do not require this lemma}
%
%\begin{lem}\label{L1}
%The entries of $M$ are given by
%	\begin{align*}
%	a=Tr\left(\frac{r_1r_2'\epsilon}{\delta(\mathfrak{a})}\right), b=Tr\left(\frac{r_1r_1'\epsilon'}{\delta(\mathfrak{a})}\right),
%	c=Tr\left(\frac{r_2r_2'\epsilon}{\delta(\mathfrak{a})}\right),
%	d=Tr\left(\frac{r_1r_2'\epsilon'}{\delta(\mathfrak{a})}\right).
%	\end{align*} 
%Furthermore, det$(M)=N(\epsilon)$ and  $bc\neq0$.
%\end{lem}

Now we mention some special values of generalized Dedekind sum as obtained in \cite{HK}.

\begin{lem}\label{L2}
For any positive integer $m$, we have
\begin{enumerate}
\item $S^3(\pm1,m)=\pm(-m^4+5m^2-4)/(120m^3)$
\item $S^2(\pm1,m)=(m^4+10m^2-6)/(180m^3)$
\end{enumerate}
\end{lem}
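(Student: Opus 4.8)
The plan is to prove the two identities by unwinding the definition of the generalized Dedekind sum and reducing everything to elementary power sums. Recall that, in the convention underlying Lang's formula, $S^i_4(a,c)=\sum_{\mu=0}^{c-1}\overline{B}_i\!\left(\tfrac{a\mu}{c}\right)\overline{B}_{4-i}\!\left(\tfrac{\mu}{c}\right)$, where $\overline{B}_k$ denotes the periodic Bernoulli function, agreeing with the Bernoulli polynomial $B_k$ on $[0,1)$ and satisfying $\overline{B}_k(n)=B_k$ for $n\in\mathbb{Z}$ (so in particular $\overline{B}_1(0)=-\tfrac12$). The first step is to dispose of the sign. Using the parity relation $\overline{B}_k(-x)=(-1)^k\overline{B}_k(x)$, which holds for all $k$ at every non-integer $x$, the substitution $a=-1$ gives $S^i(-1,m)=(-1)^i\,S^i(1,m)$; here the argument carrying the sign is $\overline{B}_i$ with $i\in\{2,3\}$, so no anomaly at $\mu=0$ interferes (that term involves $\overline{B}_i(0)=B_i$, compatible with $(-1)^i$, and for $i=3$ it vanishes since $B_3=0$). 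Hence $S^3(-1,m)=-S^3(1,m)$ and $S^2(-1,m)=S^2(1,m)$, which already explains why the value for $S^3$ carries the sign $\pm$ while that for $S^2$ does not. It therefore suffices to evaluate both sums at $a=1$.

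For $a=1$ I would separate the $\mu=0$ term from the rest. When $1\le\mu\le m-1$ we have $\tfrac{\mu}{m}\in(0,1)$, so $\overline{B}_k(\tfrac{\mu}{m})=B_k(\tfrac{\mu}{m})$ is a genuine Bernoulli polynomial value, while the $\mu=0$ term contributes $B_iB_{4-i}$, a product of Bernoulli numbers. Thus
$$S^2(1,m)=B_2^2+\sum_{\mu=1}^{m-1}B_2\!\left(\tfrac{\mu}{m}\right)^{2},\qquad S^3(1,m)=B_3B_1+\sum_{\mu=1}^{m-1}B_3\!\left(\tfrac{\mu}{m}\right)B_1\!\left(\tfrac{\mu}{m}\right),$$
and the free term vanishes in the second case because $B_3=0$. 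Now substitute the explicit polynomials $B_1(x)=x-\tfrac12$, $B_2(x)=x^2-x+\tfrac16$, $B_3(x)=x^3-\tfrac32x^2+\tfrac12x$, expand the products, and replace each $\sum_{\mu=1}^{m-1}\mu^{k}$ by its Faulhaber closed form. Collecting the resulting polynomial in $m$ over the common denominator $m^3$ should yield exactly $(m^4+10m^2-6)/(180m^3)$ and $(-m^4+5m^2-4)/(120m^3)$.

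The only real obstacle is the bookkeeping: the $S^2$ computation involves $\sum\mu^4,\dots,\sum\mu$ together with a large cancellation of the lower-degree and constant contributions, so those terms must be tracked carefully, and getting the $\mu=0$ term and the convention $\overline{B}_1(0)=-\tfrac12$ right is essential, since omitting the free term changes the constant in the numerator. As a safeguard I would test the closed forms against direct evaluation at $m=2$ and $m=3$; for instance $S^2(1,2)=B_2^2+B_2(\tfrac12)^2=\tfrac{1}{36}+\tfrac{1}{144}=\tfrac{5}{144}$, which matches $(16+40-6)/(180\cdot8)$ and thereby pins down every constant. An essentially equivalent alternative would be to invoke the reciprocity law for these higher Dedekind sums together with the trivial value of $S^i$ at modulus $1$; reciprocity then links $S^i(1,m)$ directly to Bernoulli numbers, but the power-sum computation above is the more transparent and self-contained route, and is the one I expect \cite{HK} to follow.
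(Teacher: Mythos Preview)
The paper does not prove this lemma; it merely quotes the two closed forms from \cite{HK} as ``special values of generalized Dedekind sum as obtained in \cite{HK}''. Your proposal therefore supplies strictly more than the paper does, and your approach is sound: the definition of $S^i(a,c)$ used here is exactly $\sum_{j\bmod c}P_{4-i}(j/c)P_i(aj/c)$ with $P_t(x)=B_t(\{x\})$, so your identification with periodic Bernoulli functions, the parity reduction $S^i(-1,m)=(-1)^iS^i(1,m)$, and the expansion into Faulhaber sums all go through as stated. The numerical check at $m=2$ confirms the constants, and the same check at $m=3$ would match the values $S^3(1,3)=-1/81$, $S^2(1,3)=11/324$ that the paper itself uses in Proposition~\ref{P1}. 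There is nothing to compare against, since the paper offers no argument; your direct computation is presumably also what \cite{HK} does.
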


%{\bf Do we need the following lemma?}
%\begin{lem}\label{L3}
%For any positive even integer $m$, we have
%\begin{enumerate}
%\item $S^3(m\pm1,2m)=\pm S^1(m+1,2m)=\mp(m^4-50m^2+4)/(960m^3)$
%\item $S^2(m-1,2m)=S^2(m+1,2m)=(m^4+100m^2-6)/(1440m^3)$
%\item $S^3(m+1,4m)=(-m^4-180m^3+410m^2-4)/(7680m^3)$
%\item $S^3(m-1,4m)=(m^4-180m^3-410m^2+4)/(7680m^3)$
%\item $S^2(m-1,4m)=S^2(m+1,4m)=(m^4+820m^2-6)/(11520m^3)$
%\end{enumerate}
%\end{lem}

For other values of generalized Dedekind sum we recall the formula obtained by Lang. For $n=2,3,4, \ldots, $ and $r=0,1, \ldots, 2n$, Lang \cite{HL} obtained following formula 
$$S^r_{2n}(a,c)=\sum_{ j \pmod c}P_{2n-r}(\frac{j}{c})P_r(\frac{aj}{c}).$$
Here $P_t(x)=\sum_{s=0}^t \binom{t}{s}B_s (\{x\})^{(t-s)}$, $B_s$ is the $s^{th}$ Bernoulli number and $\{x\}$ denotes the fractional part for any real number $x$. In particular we have $P_1(x)=\{x\}-\frac{1}{2}$, $P_2(x)=\{x\}^2-\{x\}+\frac{1}{6}$ and $P_3(x)=\{x\}^3-\frac{3}{2}\{x\}^2+\frac{1}{2}\{x\}$. The generalized Dedekind sums we are concerned with are recorded in the following lemma.

\begin{lem}\label{L4}
We have
$$S^2(a,c)=\sum_{ j \pmod c}P_2(\frac{j}{c})P_2(\frac{aj}{c}) \qquad \mbox{ and }$$
$$S^3(a,c)=\sum_{ j \pmod c}P_1(\frac{j}{c})P_3(\frac{aj}{c}).$$
\end{lem}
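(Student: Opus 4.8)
The plan is extremely short, because this lemma is nothing more than a direct specialization of Lang's general identity
\[
S^r_{2n}(a,c)=\sum_{j\pmod c}P_{2n-r}\!\left(\tfrac{j}{c}\right)P_r\!\left(\tfrac{aj}{c}\right),
\]
recalled just above, to the single case $2n=4$. Recall that, by the convention fixed in \thmref{HL}, the unadorned symbol $S^i(a,c)$ denotes $S^i_4(a,c)$. First I would set $r=2$ in Lang's formula: since $2n-r=4-2=2$, this gives $S^2(a,c)=S^2_4(a,c)=\sum_{j\pmod c}P_2(j/c)\,P_2(aj/c)$, which is the first displayed identity. Then I would set $r=3$: since $2n-r=4-3=1$, this gives $S^3(a,c)=S^3_4(a,c)=\sum_{j\pmod c}P_1(j/c)\,P_3(aj/c)$, the second identity. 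That is the entire argument.

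There is no genuine obstacle; the only "work" is keeping track of the index $2n-r$ (it equals $2$ when $r=2$ and $1$ when $r=3$). The reason to record the statement as a separate lemma is purely organizational: these two particular sums, together with the explicit low-degree Bernoulli polynomials $P_1(x)=\{x\}-\tfrac12$, $P_2(x)=\{x\}^2-\{x\}+\tfrac16$, and $P_3(x)=\{x\}^3-\tfrac32\{x\}^2+\tfrac12\{x\}$ written out just before, are exactly the inputs needed to evaluate the right-hand side of Lang's formula \eqref{HLF} for the fields $\mathbb{Q}(\sqrt{9m^2+4m})$ treated in Section~3. Isolating them here keeps those later computations of $S^2(a,c)$ and $S^3(a,c)$ (for the specific $a,c$ arising from the ideal classes of $K$) readable.
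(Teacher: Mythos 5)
Your proposal is correct and matches the paper's treatment: the paper states Lemma \ref{L4} without proof precisely because it is the immediate specialization of Lang's displayed formula $S^r_{2n}(a,c)=\sum_{j\pmod c}P_{2n-r}(j/c)P_r(aj/c)$ to $2n=4$ with $r=2$ and $r=3$, using the convention $S^i(a,c)=S^i_4(a,c)$ from Theorem \ref{HL}. Nothing further is needed.
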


We now mention a result of Yokoi that facilitates us with the fundamental unit of the fields of the form $\mathbb{Q}(\sqrt{9m^2+ 4m})$.

\begin{thm}\label{FU}
	(Lemma 3 in Yokoi \cite{HY}) Let $p$ be any prime congruent to -1 $mod$ 4, and assume that an unit $\epsilon$ of a real quadratic field $\mathbb{Q}(\sqrt{D})$ ($D>0$ square-free) is of the form
	\begin{equation*}
		\epsilon=\frac{1}{2}(t+p\sqrt{D}) \quad or \quad t+p\sqrt{D} \quad(t>0)
	\end{equation*}
	Then, the real quadratic field $\mathbb{Q}(\sqrt{D})$ is of R-D type in narrow sense or the unit $\epsilon$ is the fundamental unit of $\mathbb{Q}(\sqrt{D})$ satisfying $N(\epsilon)=1$.
\end{thm}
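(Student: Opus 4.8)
The plan is to compare the given unit $\epsilon$ with the fundamental unit of $K=\mathbb{Q}(\sqrt{D})$ and to exploit that the coefficient of $\sqrt{D}$ in $\epsilon$ is essentially the prime $p$. Write the fundamental unit in the normalized form $\epsilon_0=\tfrac{1}{2}(x_0+y_0\sqrt{D})$ with $x_0,y_0>0$ of equal parity and $x_0^2-Dy_0^2=\pm4$. First I would record the elementary computation that $y_0\le 2$ already forces $K$ to be of R-D type in the narrow sense: $y_0=1$ gives $D=x_0^2\mp4$, while $y_0=2$ forces $x_0$ even and $D=(x_0/2)^2\mp1$. Only the contrapositive is needed, so from now on I assume $K$ is \emph{not} of narrow R-D type, whence $y_0\ge 3$. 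Since $t>0$ gives $\epsilon>1$, we have $\epsilon=\epsilon_0^{\,n}$ for a unique $n\ge1$; writing $\epsilon=\tfrac12(X+Y\sqrt{D})$, the hypothesis yields $Y=p$ in the half-integer case and $Y=2p$ in the integer case.

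The central step is a divisibility constraint. Expanding $\epsilon_0^{\,n}$ gives $Y=y_0U_n$, where $U_n=(\epsilon_0^{\,n}-\overline{\epsilon_0}^{\,n})/(\epsilon_0-\overline{\epsilon_0})$ is the integer Lucas sequence with $U_0=0$, $U_1=1$, and $U_{k+1}=x_0U_k-N(\epsilon_0)U_{k-1}$; in particular $(U_n)$ is a strictly increasing sequence of positive integers with $U_1=1$ and $U_2=x_0$. Hence $y_0\mid Y\mid 2p$, and since $p$ is prime and $y_0\ge3$ this forces $y_0\in\{p,2p\}$. I would then exclude $n\ge2$ as follows. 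If $y_0=2p$, then necessarily $Y=2p$ and $U_n=1$, so $n=1$. If $y_0=p$, then $U_n=Y/y_0\in\{1,2\}$; but $x_0^2=Dy_0^2\pm4\ge 2\cdot9-4>4$ gives $x_0\ge3$, so $U_2=x_0\ge3$, and by monotonicity no term $U_n$ equals $2$. Thus $U_n\in\{1,2\}$ forces $U_n=1$, i.e.\ $n=1$. In either case $\epsilon=\epsilon_0$ is the fundamental unit.

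It remains to show $N(\epsilon)=1$, and here the hypothesis $p\equiv-1\pmod4$ enters. Suppose $N(\epsilon)=-1$, i.e.\ $X^2-DY^2=-4$. When $Y=p$ this gives $X^2\equiv-4\pmod p$ with $p\nmid X$ (otherwise $p\mid4$), so $-1$ is a quadratic residue modulo $p$; when $Y=2p$ we get $X=2s$ and $s^2\equiv-1\pmod p$ with $p\nmid s$, again making $-1$ a quadratic residue modulo $p$. Either conclusion contradicts $p\equiv3\pmod4$, so $N(\epsilon)=1$. Combining the two alternatives produces the stated dichotomy: either $y_0\le2$ and $K$ is of narrow R-D type, or $\epsilon=\epsilon_0$ is fundamental with $N(\epsilon)=1$.

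The main obstacle is the middle case of the divisibility analysis, namely excluding $y_0=p$ together with $U_n=2$ (equivalently $Y=2p$, $n\ge2$): a priori $U_2=x_0$ could be as small as $2$, and it is precisely the size estimate $x_0\ge3$ (reinforced by the parity constraint $x_0\equiv y_0\pmod2$, which makes $x_0$ odd here) that rules this out. I would also emphasize that the R-D alternative is genuine and not an artifact of the argument: for $D=p^2+4$ the fundamental unit has $y_0=1$ and $N(\epsilon_0)=-1$, and $\epsilon_0^{\,2}$ is of the stated form $\tfrac12(X+p\sqrt{D})$ while failing to be fundamental, so in such cases the theorem can only hold through its first alternative.
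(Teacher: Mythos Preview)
The paper does not supply a proof of this statement at all: Theorem~\ref{FU} is quoted verbatim as ``Lemma 3 in Yokoi \cite{HY}'' and is used as a black box when identifying the fundamental unit of $\mathbb{Q}(\sqrt{9m^2+4m})$. So there is no in-paper argument to compare against; what you have written is a self-contained proof of a result the authors merely cite.

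That said, your argument is correct and is essentially the classical one. The key points --- writing $\epsilon=\epsilon_0^{\,n}$, extracting the divisibility $y_0\mid Y$ via the Lucas-type identity $Y=y_0U_n$, and then using primality of $p$ together with $y_0\ge 3$ (forced by the non--R-D assumption) to pin down $y_0\in\{p,2p\}$ and $n=1$ --- are exactly the ingredients of Yokoi's original proof. Your treatment of the norm via quadratic residues modulo $p\equiv 3\pmod 4$ is also the standard step. Two small remarks: (i) your monotonicity claim for $(U_n)$ requires $x_0\ge 2$, which you do secure in the relevant case, but it would be cleaner to state that hypothesis explicitly rather than asserting monotonicity in general; (ii) the observation that $y_0\le 2$ yields $D=x_0^2\mp 4$ or $D=(x_0/2)^2\mp 1$ indeed lands in the narrow R-D class, and only this direction is needed, as you note. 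Your closing example with $D=p^2+4$ is a nice sanity check showing the dichotomy is sharp.
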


%{\bf Do we need the following theorem?}
\begin{thm}\label{FUM}
	(Yokoi \cite{HY}) For any prime $p$ congruent to $-1$ modulo $4$, there exists an integer $D_0=D_0(p)$ such that if $D=p^2m^2\pm4m$ has no square factor except $4$ and is bigger than $D_0$, then the fundamental unit $\epsilon_D$ of the real quadratic field $\mathbb{Q}(\sqrt{D})$ is of the following form:\\
 \begin{center}
 	 $\epsilon_D=
 	\begin{cases}
 	\frac{1}{2}[(p^2m\pm2)+p\sqrt{p^2m^2\pm4m}] \qquad D: square-free,\\
 	\frac{1}{2}(p^2m\pm2)+p\sqrt{\frac{1}{4}(p^2m^2\pm4m)}] \quad otherwise,\\
 	\end{cases}	$
 \end{center}
and $N(\epsilon)=1$.
\end{thm}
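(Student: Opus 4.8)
The plan is to exhibit the asserted unit explicitly, to verify by hand that it is a unit of norm $1$ of exactly the shape required by \thmref{FU}, to let that theorem force it to be \emph{fundamental}, and to absorb the finitely many Richaud--Degert exceptions into the constant $D_{0}(p)$. Since $p^{2}m^{2}-4m=p^{2}(-m)^{2}+4(-m)$, replacing $m$ by $-m$ when necessary lets us assume $m>0$, at the price of carrying both signs in $D=p^{2}m^{2}\pm4m$. Fix one of the two signs, write $D=p^{2}m^{2}\pm4m$ with $m>0$, and set
\[
\eta=\tfrac12\bigl[(p^{2}m\pm2)+p\sqrt{D}\bigr].
\]
First I would record the elementary normalisations. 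If $D$ is square-free then $m$ is odd, so $D\equiv1\pmod4$ and $p^{2}m\pm2\equiv p\equiv1\pmod2$, whence $\eta\in\mathcal{O}_{K}$; if instead $4\parallel D$ then $m\equiv2\pmod4$, the integer $D/4$ is square-free and $\equiv3\pmod4$, $\sqrt{D}/2=\sqrt{D/4}\in\mathcal{O}_{K}$, and $p^{2}m\pm2$ is even, so again $\eta\in\mathcal{O}_{K}$. A one-line computation then yields, in all cases,
\[
N(\eta)=\tfrac14\bigl[(p^{2}m\pm2)^{2}-p^{2}D\bigr]=\tfrac14\bigl[(p^{2}m\pm2)^{2}-p^{2}(p^{2}m^{2}\pm4m)\bigr]=1,
\]
so $\eta$ is a unit with $N(\eta)=1$, and $\eta>1$ because $m>0$ and $p\ge3$.

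Next I would apply \thmref{FU} with the prime $p\equiv-1\pmod4$ and the unit $\epsilon=\eta$: in the square-free case $\eta=\tfrac12(t+p\sqrt{D})$ with $t=p^{2}m\pm2>0$, and in the remaining case $\eta=t+p\sqrt{D'}$ where $D'=D/4$ is the square-free radicand of $K=\mathbb{Q}(\sqrt{D})$ and $t=\tfrac12(p^{2}m\pm2)>0$; either way $\eta$ has precisely the form to which \thmref{FU} applies. The conclusion there is the dichotomy: \emph{either} $K$ is of R-D type in the narrow sense, \emph{or} $\eta$ is the fundamental unit $\epsilon_{D}$, and $N(\epsilon_{D})=1$. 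In the second alternative we are done, since $\eta$ is exactly the element displayed in the statement. Hence the whole theorem reduces to showing that the first alternative can hold for only finitely many $m$ once $p$ is fixed; choosing $D_{0}(p)$ larger than all of those $D$ then finishes the proof.

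For the last point I would argue directly. For $m>0$ one computes $\lfloor\sqrt{D}\rfloor=pm$ in the ``$+$'' case (because $(pm)^{2}<D<(pm+1)^{2}$) and $\lfloor\sqrt{D}\rfloor=pm-1$ in the ``$-$'' case, with the analogous statements for $\sqrt{D/4}$ in the non-square-free branch. Writing the relevant radicand as $n^{2}+r$ with $n$ this floor, the remainder $r$ comes out to $4m$, or $(2p-4)m-1$, or (in the non-square-free branch, with $m=2m'$) $2m'$ or $(2p-2)m'-1$; in every case $|r|\to\infty$ as $|m|\to\infty$, whereas membership in the narrow R-D class forces $r$ into the fixed short list of admissible remainders (see \cite{HY,HY1}). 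So only finitely many $m$ can occur, all explicitly bounded in terms of $p$, and $D_{0}(p)$ may be chosen past the largest resulting $D$. I expect this final bookkeeping---pinning Yokoi's precise definition of ``R-D type in the narrow sense'' against the sub-families $D=p^{2}m^{2}+4m$ with $m$ odd, $D=p^{2}m^{2}-4m$ with $m$ odd, and $4\parallel D$, and reading off the correct $p$-dependent exceptional set---to be the only genuinely delicate step; the unit computation and the invocation of \thmref{FU} are routine.
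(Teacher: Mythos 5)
This theorem is not proved in the paper at all: it is imported verbatim from Yokoi \cite{HY}, and the only internal echo of a proof is the argument in Section~3, where for $p=3$ the authors check directly that $9m^2+4m$ admits no admissible R-D representation and then invoke Theorem~\ref{FU} to identify the fundamental unit. Your proposal is exactly that strategy carried out for general $p$, and it is sound: the integrality of $\eta$ in the two branches ($m$ odd, $D\equiv1\pmod 4$; resp.\ $m\equiv2\pmod 4$, $D/4$ square-free and $\equiv3\pmod 4$), the computation $N(\eta)=1$, and the reduction via Theorem~\ref{FU} to excluding the narrow R-D alternative for $D$ large are all correct, and this is in substance Yokoi's own route. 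Two footnotes to your final step. First, a narrow R-D representation $d=n^2+r$ with $r\in\{\pm1,\pm4\}$ may have $r<0$, in which case $n=\lfloor\sqrt d\rfloor+1$; so alongside your floor remainders $4m$, $(2p-4)m-1$, $2m'$, $(2p-2)m'-1$ you should also record the deficits to the next square up, namely $(2p-4)m+1$, $4m$, $(2p-2)m'+1$, $2m'$. All of these grow linearly in $m$ for fixed $p$, so the finiteness conclusion stands unchanged. Second, the exceptions you park inside $D_0(p)$ are genuinely present: for $m=1$ one gets $D=p^2+4$, which is narrow R-D (for $p=3$, $D=13$ has fundamental unit $(3+\sqrt{13})/2$ of norm $-1$, and your $\eta$ is its square), so the constant $D_0(p)$ cannot be dispensed with and absorbing these finitely many cases into it is the intended reading of the statement.
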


We end this section with a result of Hasse (see \cite{HH}).
\begin{thm}\label{HH}
	(Hasse \cite{HH}) If a real quadratic field $K=\mathbb{Q}(\sqrt{D})$ has $h_K=1$ then $D=p$, $2p$ or $qr$ where $p$, $q$ and $r$ are primes and $p \equiv 1 \pmod 4, q\equiv r\equiv3 (\bmod4)$.
\end{thm}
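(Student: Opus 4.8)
The plan is to derive the restriction on $D$ purely from the genus theory of the quadratic field $K$, supplemented by an analysis of the norm of the fundamental unit $\epsilon$ to pin down the residues modulo $4$. Write $D_K$ for the discriminant of $K$ and let $t$ denote the number of rational primes that ramify in $K$, equivalently the number of distinct prime divisors of $D_K$. Genus theory gives $\dim_{\mathbb{F}_2} C^+/(C^+)^2 = t-1$, where $C^+$ is the narrow ideal class group; equivalently the number of genera equals $2^{t-1}$. Since the narrow class number $h^+$ equals $h_K$ when $N(\epsilon)=-1$ and $2h_K$ when $N(\epsilon)=+1$, the hypothesis $h_K=1$ forces $h^+\in\{1,2\}$, whence $2^{t-1}\mid h^+\le 2$ and therefore $t\le 2$: at most two primes ramify in $K$. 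Moreover this sharpens to a dichotomy: when $h_K=1$ we have $t=1\iff N(\epsilon)=-1$ and $t=2\iff N(\epsilon)=+1$.

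Next I would translate ``$t\le2$'' into the shape of the square-free integer $D$. Recalling that $D_K=D$ when $D\equiv1\pmod4$ and $D_K=4D$ otherwise, one counts ramified primes as $t=\omega(D)$ when $D\equiv1,2\pmod4$ and $t=\omega(D)+1$ when $D\equiv3\pmod4$, where $\omega$ is the number of distinct prime factors. The bound $t\le2$ then leaves only the possibilities $D=p$, $D=2q$, or $D=qr$ with $p,q,r$ prime (together with the single field $\mathbb{Q}(\sqrt2)$, i.e.\ $D_K=8$). This already recovers Hasse's list of admissible shapes; what remains is to justify the congruence conditions.

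To fix the residues modulo $4$ I would use the standard fact that $N(\epsilon)=-1$ is incompatible with a prime $\equiv3\pmod4$ dividing $D_K$: writing $N(\epsilon)=-1$ as $x^2-D_Ky^2=-4$ and reducing modulo such an odd prime (and modulo $4$ to handle the prime $2$) exhibits $-1$ as a quadratic residue, which is impossible. Combined with the dichotomy above, this forces, in the one-ramified-prime case, $D=p$ with $p\equiv1\pmod4$; and in the two-ramified-prime cases it is consistent (via the contrapositive) to have primes $\equiv3\pmod4$ ramifying, which is the regime in which $N(\epsilon)=+1$ and $h_K$ can remain odd.

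The main obstacle is the last exclusion, namely ruling out $D=qr$ with $q\equiv r\equiv1\pmod4$. Here the $2$-rank computation of genus theory is not enough: when both primes are $\equiv1\pmod4$ one can still have $N(\epsilon)=+1$ (precisely when $(q/r)=1$), so $h^+$ is merely forced to be even, which is a priori consistent with $h_K=1$. To get a contradiction I would pass to the $4$-rank of $C^+$ via the R\'edei--Reichardt theorem: for $D=qr$ the relevant R\'edei matrix has rank governed by the Legendre symbol $(q/r)$, and $(q/r)=1$ makes the $4$-rank positive, so $4\mid h^+$ and hence $h_K\ge2$, a contradiction. Equivalently, and more by hand, I would note that $h_K=1$ forces the ramified prime ideal $\mathfrak{q}$ above $q$ to be principal, i.e.\ $x^2-Dy^2=\pm q$ solvable, and then run quadratic reciprocity to show this cannot coexist with $N(\epsilon)=+1$ unless $q\equiv r\equiv3\pmod4$. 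Assembling the surviving cases then yields exactly $D=p$ with $p\equiv1\pmod4$, $D=2p$, or $D=qr$ with $q\equiv r\equiv3\pmod4$.
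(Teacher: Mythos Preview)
The paper does not prove this statement; it is quoted from Hasse and used as a black box in the proof of Theorem~\ref{C2}. There is therefore no proof in the paper to compare your proposal against.

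Your genus-theoretic outline is the standard route and is essentially correct. Two points deserve tightening. First, the dichotomy $t=1\Longleftrightarrow N(\epsilon)=-1$ (under $h_K=1$) is true, but the implication $t=1\Rightarrow N(\epsilon)=-1$ does not follow from $2^{t-1}\mid h^+\le 2$ alone; you need the further observation that $t=1$ makes the genus group trivial, hence $C^+=(C^+)^2$ and $h^+$ is odd, which together with $h^+\in\{1,2\}$ forces $h^+=1$. Second, for the exclusion of $D=qr$ with $q\equiv r\equiv 1\pmod 4$, your two sketches dovetail rather than being alternatives: principality of the ramified prime $\mathfrak q$ (from $h_K=1$) and reduction modulo $r$ force $(q/r)=1$, and only then does R\'edei--Reichardt yield positive $4$-rank, giving $4\mid h^+$ and contradicting $h^+\le 2$; if $(q/r)=-1$ the $4$-rank argument by itself says nothing. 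You might also flag that the statement as literally written omits a few genuine class-number-one cases (for instance $D=2$, or $D=q$ with $q\equiv 3\pmod 4$, such as $D=3,7,11$); this is harmless in the paper since only the regime $D\equiv 1\pmod 4$ is ever invoked there.
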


\section{Proofs}
%We shall give proof of Theorem \ref{C1}, proof of Theorem \ref{C2} follows along the same line. 
We note that $D=9m^2+4m$ is square-free and $D \equiv 1 \pmod 4$. So the discriminant of the field $K=\mathbb{Q}(\sqrt{D})$ is $D$. We provide proofs for $m>0$. Proof for $m<0$ goes along the same line, however it is not identical. This difference creeps in because of the difference of fundamental units in two cases. As in Theorem \ref{FUM}, let $\epsilon_D=\frac{1}{2}[(9m+2)+3\sqrt{D}]$. Then $N(\epsilon_D)=1$ and $\epsilon_D$ is a unit of $K$. It is easily observed that there are no integers $n,r$ with $-n< r \leq n$ and $r|4n$ for which the equality $9m^2+4m=n^2+r$ holds. By Theorem \ref{FU}, $\epsilon_D$ is the fundamental unit of $K$. When $m<0$ then $D=9m'^2-4m'$ for $m'=-m>0$. By Theorem \ref{FU} the fundamental unit in this case is given by $\epsilon_D=\frac{1}{2}[(9m'-2)+3\sqrt{D}]$.\\

Once again, we emphasize that now onwards $m>0$. We shall use notations $\mathfrak{C}, \mathfrak{U}$ to respectively denote the trivial ideal class and the ideal class containing the prime ideal above $3$. If $p>3$ is a prime divisor of $m$, then $\mathfrak{P}$ shall denote the ideal class containing the prime ideal above $p$. In case we are having two distinct prime divisors $p_1, p_2>3$ of $m$ then the corresponding ideal classes are denoted by $\mathfrak{P}_1, \mathfrak{P}_2$.\\

The following lemma is an easy consequence of the factorization of rational primes in quadratic fields.
\begin{lem}\label{IB}
(i) If $m \equiv 0 \pmod 3$ then $3 \mathbb{O}_K=<3,\frac{3+\sqrt{D}}{2}>^2$ and $\{3,\frac{3+\sqrt{D}}{2}\} $ is an integral basis of the ideal $<3,\frac{3+\sqrt{D}}{2}>$.\\
(ii) If $m \equiv 1 \pmod 3$ then $3 \mathbb{O}_K=<3,\frac{1+\sqrt{D}}{2}> <3,\frac{1-\sqrt{D}}{2}>$ and $\{3,\frac{1+\sqrt{D}}{2}\} $ is an integral basis of the ideal $<3,\frac{1+\sqrt{D}}{2}>$.\\
(iii) If $m \equiv 2 \pmod 3$ then $3 \mathbb{O}_K=<3,\frac{3+3\sqrt{D}}{2}>$ and $\{3,\frac{3+3\sqrt{D}}{2}\} $ is an integral basis of the ideal $<3,\frac{3+3\sqrt{D}}{2}>$.\\
(iv) If $p>3$ is a prime divisor of $m$ then $p \mathbb{O}_K=<p,\frac{p+\sqrt{D}}{2}>^2$ and $\{p,\frac{p+\sqrt{D}}{2}\} $ is an integral basis of the ideal $<p,\frac{p+\sqrt{D}}{2}>$.\\
(v) If $q>3$ is a prime divisor of $D$ then $q \mathbb{O}_K=<q,\frac{q+\sqrt{D}}{2}>^2$ and $\{q,\frac{q+\sqrt{D}}{2}\} $ is an integral basis of the ideal $<q,\frac{q+\sqrt{D}}{2}>$.\\
\end{lem}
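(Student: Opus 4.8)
The statement to prove is Lemma~\ref{IB}, which describes the factorization of the rational primes $3$, $p$ (dividing $m$), and $q$ (dividing $D$) in $\mathbb{O}_K = \mathbb{Z}[\frac{1+\sqrt{D}}{2}]$, together with explicit integral bases for the prime ideals above them.

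\medskip

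The plan is to invoke the standard Dedekind--Kummer theorem on the splitting of primes in a quadratic field, which is controlled by the Kronecker symbol $\left(\frac{D}{\ell}\right)$ for an odd prime $\ell \nmid D$ (ramified, split, or inert according as this symbol is $0$, $1$, or $-1$), or equivalently by factoring the minimal polynomial $x^2 - x + \frac{1-D}{4}$ of $\frac{1+\sqrt{D}}{2}$ modulo $\ell$. First I would treat $\ell = 3$. Since $D = 9m^2 + 4m \equiv m \pmod 3$, there are exactly the three cases $m \equiv 0, 1, 2 \pmod 3$. For $m \equiv 0$: $3 \mid D$ but $9 \nmid D$ (as $D$ is square-free), so $3$ ramifies; one checks $x^2 - x + \frac{1-D}{4} \equiv x^2 - x \equiv x(x-1) \pmod 3$ after adjusting, or more directly verifies $\langle 3, \frac{3+\sqrt{D}}{2}\rangle^2 = \langle 3 \rangle$ by computing $\left(\frac{3+\sqrt{D}}{2}\right)^2 = \frac{9 + 6\sqrt{D} + D}{4}$ and checking the product ideal. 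For $m \equiv 1$: $3 \nmid D$ and $D \equiv 1 \pmod 3$, so $\left(\frac{D}{3}\right) = 1$ and $3$ splits; the two primes are $\langle 3, \frac{1\pm\sqrt{D}}{2}\rangle$ since $x^2 - x + \frac{1-D}{4} \equiv (x - \frac{1+\sqrt D}{2}\bmod 3)(x - \frac{1-\sqrt D}{2} \bmod 3)$. For $m \equiv 2$: $D \equiv 2 \pmod 3$ is a non-residue mod $3$, so $3$ is inert and $3\mathbb{O}_K = \langle 3, \frac{3+3\sqrt{D}}{2}\rangle = \langle 3\rangle$ itself. In each case the claimed set $\{\,\ell, \frac{\ell + \sqrt D}{2}\,\}$ or its variant is an integral basis because it generates a sublattice of $\mathbb{O}_K$ of index $\ell$ (compute the determinant against the basis $\{1, \frac{1+\sqrt D}{2}\}$), matching the norm $N(\mathfrak{p}) = \ell$ of a degree-one prime (or $\ell^2$ for the inert case, handled analogously).

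\medskip

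For parts (iv) and (v), the argument is uniform: if a prime $p > 3$ divides $m$ (resp.\ $q > 3$ divides $D$), then $p \mid D = 9m^2 + 4m$ (resp.\ trivially $q \mid D$), and since $D$ is square-free we have $p^2 \nmid D$ (resp.\ $q^2 \nmid D$), so $p$ (resp.\ $q$) ramifies. The ramified prime above it is $\langle p, \sqrt{D}\,\rangle$; I would show this equals $\langle p, \frac{p+\sqrt{D}}{2}\rangle$ (legitimate since $p$ is odd, so $2$ is invertible mod $p$, and $\frac{p+\sqrt D}{2} \equiv \frac{\sqrt D}{2} \pmod p$ up to a unit in the relevant sense), then verify $\langle p, \frac{p+\sqrt{D}}{2}\rangle^2 = \langle p\rangle$ by expanding $\left(\frac{p+\sqrt D}{2}\right)^2 = \frac{p^2 + 2p\sqrt D + D}{4}$ and noting $p \mid \frac{p^2 + D}{4} \cdot (\text{unit})$ because $p \mid D$, so all generators of the square ideal lie in $p\mathbb{O}_K$, while the reverse containment follows from $p = \gcd$ considerations on norms. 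The integral-basis claim again follows by an index/determinant computation giving index $p$ in $\mathbb{O}_K$.

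\medskip

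The main obstacle, such as it is, is bookkeeping rather than depth: one must be careful that $\frac{\ell + \sqrt D}{2}$ is genuinely an algebraic integer (it is, since $D \equiv 1 \pmod 4$ forces $\ell + \sqrt D \equiv \ell + 1 \pmod 2$ times $\sqrt D$ — concretely $\frac{\ell+\sqrt D}{2} = \frac{\ell - 1}{2} + \frac{1+\sqrt D}{2} \in \mathbb{O}_K$ when $\ell$ is odd) and that the stated generating sets really are $\mathbb{Z}$-module bases of the ideals, not merely generating sets as ideals; this is where the determinant computation of the transition matrix to $\{1, \frac{1+\sqrt D}{2}\}$ must be carried out and its absolute value checked to equal the ideal norm. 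Since $\ell \in \{3, p, q\}$ is always odd and $D$ is square-free with $D \equiv 1 \pmod 4$, every divisibility needed ($2$ invertible mod $\ell$, $\ell \| D$ in the ramified cases) is immediate, so the verification in each of the five cases is a short direct computation.
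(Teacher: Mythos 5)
Your proposal is correct and takes essentially the same route the paper intends: the paper gives no proof of this lemma, stating only that it is ``an easy consequence of the factorization of rational primes in quadratic fields,'' which is precisely the Dedekind--Kummer computation (splitting behaviour of $3$, $p$, $q$ read off from $D \bmod 3$ and from $\ell\mid D$ with $D$ square-free, plus the index/determinant check for the integral bases) that you carry out. One small slip: for $m \equiv 0 \pmod 3$ the minimal polynomial reduces to $x^2-x+1 \equiv (x+1)^2 \pmod 3$, not to $x(x-1)$, but this does not affect the conclusion (ramification with prime $\langle 3, \frac{3+\sqrt{D}}{2}\rangle$), and the direct verification you offer as an alternative handles it.
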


\begin{prop}\label{P1}
Let $\mathfrak{C}$ be the trivial ideal class in the class group of $K$. Then 
$$\zeta_K(-1,\mathfrak{C})=\frac{1}{120}(9m^3+6m^2+19m+6).$$
\end{prop}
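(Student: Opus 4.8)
The plan is to apply Lang's formula (Theorem \ref{HL}) directly to the trivial ideal class $\mathfrak{C}$, for which we may take the ring of integers $\mathbb{O}_K$ itself as the representative integral ideal $\mathfrak{a}$. Since $D \equiv 1 \pmod 4$, a natural integral basis is $\{r_1, r_2\} = \{1, \frac{1+\sqrt{D}}{2}\}$, so $N(\mathfrak{a}) = 1$ and $\delta(\mathfrak{a}) = r_1 r_2' - r_1' r_2 = \frac{1-\sqrt{D}}{2} - \frac{1+\sqrt{D}}{2} = -\sqrt{D}$, giving $\mathrm{sgn}\,\delta(\mathfrak{a}) = -1$. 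The first real task is to compute the matrix $M = \begin{bmatrix} a & b \\ c & d \end{bmatrix}$ expressing multiplication by the fundamental unit $\epsilon_D = \frac{1}{2}[(9m+2) + 3\sqrt{D}]$ on this basis. Writing out $\epsilon_D \cdot 1$ and $\epsilon_D \cdot \frac{1+\sqrt{D}}{2}$ in terms of $1$ and $\frac{1+\sqrt{D}}{2}$ and using $D = 9m^2+4m$ to reduce $\sqrt{D}^2$, I expect to get $c = 3$ (the coefficient of $\sqrt{D}$ in $\epsilon_D$, up to the basis normalization), and $a+d = 9m+2$ since the trace of $\epsilon_D$ is $9m+2$; the off-diagonal entry $b$ will involve $\frac{D-1}{4} = \frac{9m^2+4m-1}{4}$.

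Once $M$ is in hand, I would substitute into the formula \eqref{HLF}. The term $r_2 r_2'/N(\mathfrak{a})$ with $r_2 = \frac{1+\sqrt{D}}{2}$ gives $N(r_2) = \frac{1-D}{4} = -\frac{9m^2+4m-1}{4}$, and combined with $\mathrm{sgn}\,\delta(\mathfrak{a}) = -1$ this yields a clean rational prefactor $\frac{1}{360 \cdot 27}\cdot\frac{9m^2+4m-1}{4}$ (times an overall sign to be tracked carefully). Since $c = 3 > 0$ we have $\mathrm{sgn}\,c = 1$, and $N(\epsilon) = 1$. The Dedekind-sum values are supplied by Lemma \ref{L2} with $m \mapsto c = 3$: namely $S^3(\pm 1, 3) = \pm(-81 + 45 - 4)/(120\cdot 27) = \mp 40/3240$ and $S^2(\pm 1, 3) = (81 + 90 - 6)/(180 \cdot 27) = 165/4860$. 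The delicacy here is that $a$ and $d$ need not be $\pm 1$; however, since Dedekind sums $S^i(x, c)$ depend only on $x \bmod c$, I only need $a \bmod 3$ and $d \bmod 3$. From $a + d = 9m+2 \equiv 2 \pmod 3$ and the determinant condition $ad - bc = N(\epsilon) = 1$, I can pin down $\{a \bmod 3, d \bmod 3\}$ — I expect both $a \equiv d \equiv 1 \pmod 3$, which lets me replace $S^i(a,3), S^i(d,3)$ by $S^i(1,3)$.

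The remaining work is purely algebraic: plug $a + d = 9m+2$, $c = 3$, the explicit values of $S^2(1,3)$ and $S^3(1,3)$, and the prefactor into \eqref{HLF}, expand $(a+d)^3 - 6(a+d) = (9m+2)^3 - 6(9m+2)$, and simplify. The polynomial $(9m+2)^3 - 6(9m+2) = 729m^3 + 486m^2 + 108m + 8 - 54m - 12 = 729m^3 + 486m^2 + 54m - 4$ combines with the $c^3 S^i$ terms (which contribute constants and a term linear in $a+d$, hence linear in $m$), and after dividing by the prefactor everything should collapse to $\frac{1}{120}(9m^3 + 6m^2 + 19m + 6)$. The main obstacle I anticipate is not conceptual but bookkeeping: getting every sign right (the interplay of $\mathrm{sgn}\,\delta(\mathfrak{a})$, the sign of $N(r_2)$, and the signs inside $S^3$) and correctly reducing $a, d$ modulo $3$ so that Lemma \ref{L2} applies; a useful sanity check will be to verify the resulting formula against Zagier's Theorem \ref{DZ} (summing over all relevant $t$) for one small value such as $m = 1$.
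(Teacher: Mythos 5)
Your overall route is the same as the paper's: represent $\mathfrak{C}^{-1}$ by $\mathbb{O}_K$, apply Lang's formula with the fundamental unit $\epsilon_D=\frac{1}{2}[(9m+2)+3\sqrt{D}]$, reduce $a,d$ modulo $c$, quote Lemma \ref{L2} for $S^i(1,3)$, and simplify. Your congruence argument (from $a+d\equiv 2\pmod 3$ and $ad\equiv 1\pmod 3$ one forces $a\equiv d\equiv 1\pmod 3$) is a legitimate substitute for the paper's explicit computation of the matrix entries, and your values $S^3(1,3)=-\tfrac{1}{81}$, $S^2(1,3)=\tfrac{11}{324}$ are correct.

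There is, however, a concrete inconsistency in your setup which, followed literally, produces the wrong value. With your ordering $r_1=1$, $r_2=\frac{1+\sqrt{D}}{2}$, multiplication by $\epsilon_D$ gives $a=\frac{9m-1}{2}$, $b=3$, $c=\frac{3(D-1)}{4}$, $d=\frac{9m+5}{2}$: the entry equal to $3$ is $b$, not $c$. So either (i) you keep your ordering, in which case the Dedekind sums have the large modulus $c=\frac{3(D-1)}{4}$ and Lemma \ref{L2} does not apply, or (ii) you swap to $r_1=\frac{1+\sqrt{D}}{2}$, $r_2=1$ (as the paper does), in which case $c=3$, but then $r_2r_2'=1$, $\delta(\mathfrak{a})=+\sqrt{D}$, and the prefactor in \eqref{HLF} is $\frac{1}{360\cdot 27}$, not $\frac{1}{360\cdot 27}\cdot\frac{9m^2+4m-1}{4}$. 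Your proposal mixes the two conventions: it takes $\mathrm{sgn}\,\delta(\mathfrak{a})=-1$ and $r_2r_2'=\frac{1-D}{4}$ from the first ordering but $c=3$ and the sums $S^i(1,3)$ from the second, so the end result would carry a spurious factor $\frac{D-1}{4}$ and be a quintic in $m$ rather than the claimed cubic. With the consistent choice (ii) the bracket equals $(9m+2)^3-6(9m+2)+160+165(9m+2)=729m^3+486m^2+1539m+486$, and dividing by $360\cdot 27$ gives exactly $\frac{1}{120}(9m^3+6m^2+19m+6)$; your own sanity check at $m=1$ against Theorem \ref{DZ} would have caught the extra factor, but as written this step needs to be repaired.
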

\begin{proof}
We consider the integral ideal $\mathbb{O}_K$ in the ideal class $\mathfrak{C}^{-1}$. Then $r_1=\frac{1+\sqrt{D}}{2},r_2=1$ is an integral basis of $\mathbb{O}_K$. If $M=$ $\begin{bmatrix} a & b \\ c & d \end{bmatrix}$ is an integral matrix satisfying
$$ \epsilon_D \begin{bmatrix}
r_1 \\ r_2
\end{bmatrix}= M  \begin{bmatrix}
r_1 \\ r_2
\end{bmatrix}.$$
Then we obtain
$$a=\frac{1}{2}[(9m+2)+3], b=\frac{3}{4}(9m^2+4m-1), c=3 \mbox{ and } d=\frac{1}{2}[(9m+2)-3].$$
Clearly $a=1 \pmod c$ and $d=1 \pmod c$.\\
From the Lemma \ref{L2} we get
$$S^3(a,c)=S^3(d,c)=S^3(1,3)=\frac{-1}{81}$$
and
$$S^2(a,c)=S^2(d,c)=S^2(1,3)=\frac{11}{324}.$$
Substituting these in Theorem \ref{HL} we obtain the proposition.
\end{proof}

%Certainly, $\zeta_K(-1) \geq \zeta_K(-1, \mathfrak{C})$ and equality holds if and only if $h_K=1$ (see \cite{BK1}). From this Theorem \ref{C1} follows immediately.

\begin{prop}\label{P2}
If $3|m$ and $\mathfrak{U}$ is an ideal class in the class group of the field $K$ such that $<3,\frac{3+\sqrt{D}}{2}> \in \mathfrak{U}^{-1}$ then
$$\zeta_K(-1, \mathfrak{U})=\frac{1}{360}(3m^3+2m^2+273m+162).$$
\end{prop}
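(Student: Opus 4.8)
The plan is to imitate the proof of Proposition~\ref{P1} verbatim, the only changes being the ideal, its integral basis, and the resulting entries of the matrix $M$. By Lemma~\ref{IB}(i), the ideal $\mathfrak{a}=\langle 3,\tfrac{3+\sqrt D}{2}\rangle$ lies in $\mathfrak{U}^{-1}$ and has integral basis $\{r_1,r_2\}$ with, say, $r_1=\tfrac{3+\sqrt D}{2}$ and $r_2=3$; its norm is $N(\mathfrak a)=3$. First I would compute the conjugates $r_1',r_2'$, the quantity $\delta(\mathfrak a)=r_1r_2'-r_1'r_2=-3\sqrt D$ (so $\operatorname{sgn}\delta(\mathfrak a)=-1$), and the product $r_2r_2'=9$, all of which feed into the prefactor of Lang's formula \eqref{HLF}.

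Next I would solve $\epsilon_D\begin{bmatrix} r_1\\ r_2\end{bmatrix}=M\begin{bmatrix} r_1\\ r_2\end{bmatrix}$ for the integral matrix $M=\begin{bmatrix} a&b\\ c&d\end{bmatrix}$, using $\epsilon_D=\tfrac12[(9m+2)+3\sqrt D]$ and $D=9m^2+4m$. Writing $\epsilon_D r_1$ and $\epsilon_D r_2$ in terms of the basis, I expect to obtain $c=3$ again (since $3$ still ramifies/splits over $3$ in the relevant way), together with explicit values of $a,d$ congruent to some fixed residues mod $3$, and a value of $b$ that is a polynomial in $m$. The key point, exactly as before, is that $a$ and $d$ will be $\equiv \pm1\pmod 3$, so that Lemma~\ref{L2} applies and gives $S^3(a,3),S^2(a,3),S^3(d,3),S^2(d,3)$ in closed form (each equal to $\pm\tfrac{1}{81}$ or $\tfrac{11}{324}$ up to the relevant signs).

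Finally I would substitute $\operatorname{sgn}\delta(\mathfrak a)$, $r_2r_2'$, $N(\mathfrak a)$, $c$, $a+d$, $N(\epsilon_D)=1$, and the four Dedekind-sum values into \eqref{HLF}, and simplify the resulting polynomial in $m$. The claim is that this collapses to $\tfrac{1}{360}(3m^3+2m^2+273m+162)$.

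The only real obstacle is bookkeeping: getting the integral basis ordering, the sign of $\delta(\mathfrak a)$, and the residues of $a,d$ mod $3$ all mutually consistent, since an error in any one of them propagates through the whole formula. There is no conceptual difficulty — everything is forced once the matrix $M$ is pinned down — but I would double-check the final polynomial against Zagier's formula (Theorem~\ref{DZ}): summing $\zeta_K(-1,\mathfrak C)$ from Proposition~\ref{P1}, $\zeta_K(-1,\mathfrak U)$, and the contributions of the remaining classes should reproduce $\zeta_K(-1)=\tfrac{1}{60}\sum_{t}\sigma\!\left(\tfrac{D-t^2}{4}\right)$ for small values of $m$ with $3\mid m$, which is an easy numerical sanity check.
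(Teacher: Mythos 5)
Your overall strategy is the paper's own: take the basis $r_1=\frac{3+\sqrt D}{2}$, $r_2=3$ of $\langle 3,\frac{3+\sqrt D}{2}\rangle$ from Lemma \ref{IB}, determine the matrix $M$ for $\epsilon_D$, and substitute into Lang's formula (\ref{HLF}). However, two of the concrete values you pencil in are wrong, and the second would derail the computation if carried out as written. First, with $r_2'=r_2=3$ you have $\delta(\mathfrak a)=r_1r_2'-r_1'r_2=3(r_1-r_1')=+3\sqrt D$, not $-3\sqrt D$; taking $\mathrm{sgn}\,\delta(\mathfrak a)=-1$ would flip the sign of the partial zeta value. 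Second, and more seriously, $c$ is not $3$: from $\epsilon_D r_2=3\epsilon_D=\frac{3(9m+2)}{2}+\frac{9}{2}\sqrt D$ and the fact that the $\sqrt D$-coefficient of $r_1$ is $\frac12$, one gets $c=9$, and then $a=\frac{9m+11}{2}$, $b=\frac{9m^2+4m-9}{4}$, $d=\frac{9m-7}{2}$. Since $3\mid m$ and $m$ is odd, $2a\equiv 2d\equiv 2\pmod 9$, hence $a\equiv d\equiv 1\pmod 9$, and the Dedekind sums required are the modulus-$9$ values $S^3(1,9)=\frac{-154}{3\cdot 9^3}$ and $S^2(1,9)=\frac{491}{12\cdot 9^3}$ from Lemma \ref{L2}, not the modulus-$3$ values $\pm\frac{1}{81}$ and $\frac{11}{324}$ that you anticipate; plugging in the latter does not yield $\frac{1}{360}(3m^3+2m^2+273m+162)$.

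Once these two corrections are made (and with $N(\mathfrak a)=3$, $r_2r_2'=9$, $N(\epsilon_D)=1$, which you have right), the substitution into (\ref{HLF}) is exactly the paper's proof and produces the stated polynomial; your suggested sanity check against Zagier's formula (Theorem \ref{DZ}), e.g.\ at $m=3$, $D=93$, is a sensible way to catch precisely the kind of bookkeeping slips noted above.
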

\begin{proof}
By Lemma \ref{IB}, $r_1=\frac{3+\sqrt{D}}{2},r_2=3$ is an integral basis of the ideal $\mathfrak{u}=<3,\frac{3+\sqrt{D}}{2}>$. We have $\delta(\mathfrak{u})=3\sqrt{D}$.\\
If $M=$ $\begin{bmatrix} a & b \\ c & d \end{bmatrix}$ is an integral matrix satisfying
$$ \epsilon_D \begin{bmatrix}
r_1 \\ r_2
\end{bmatrix}= M  \begin{bmatrix}
r_1 \\ r_2
\end{bmatrix}.$$
Then we obtain $a=\frac{9m+11}{2} , b=\frac{9m^2+4m-9}{4} ,c=9 , d=\frac{9m-7}{2} $. From this it is clear that $a \equiv 1 \pmod c, d \equiv 1 \pmod c$. Hence, by definition, we have $S^3(a,c)=S^3(d,c)=S^3(1,c)$ and $S^2(a,c)=S^2(d,c)=S^2(1,c)$. By Lemma \ref{L2}, we obtain
$$S^3(a,c)=S^3(d,c)=\frac{-154}{3.9^3} \quad \mbox{ and } S^2(a,c)=S^2(d,c)=\frac{491}{12.9^3}.$$
Substituting these in (\ref{HLF}) gives
$$\zeta_K(-1, \mathfrak{U})=\frac{1}{360}(3m^3+2m^2+273m+162).$$
\end{proof}
%{\bf The above proposition is covered in proposition \ref{P5}, and shall be deleted.}

\begin{prop}\label{P3}
If $m \equiv 1 \pmod 3$ and $\mathfrak{U}$ is an ideal class in the class group of the field $K$ such that $<3,\frac{1+\sqrt{D}}{2}> \in \mathfrak{U}^{-1}$ then
$$\zeta_K(-1, \mathfrak{U})=\frac{1}{360}(3m^3+2m^2+113m+2).$$
\end{prop}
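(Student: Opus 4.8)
The plan is to follow the scheme of the proofs of Propositions \ref{P1} and \ref{P2}. Since $m\equiv 1\pmod 3$, Lemma \ref{IB}(ii) tells us that the ideal $\mathfrak{a}=<3,\frac{1+\sqrt{D}}{2}>$ lies in $\mathfrak{U}^{-1}$, has norm $N(\mathfrak{a})=3$, and has integral basis $r_1=\frac{1+\sqrt{D}}{2}$, $r_2=3$. Then $r_1'=\frac{1-\sqrt{D}}{2}$ and $r_2'=3$, so $\delta(\mathfrak{a})=r_1r_2'-r_1'r_2=3\sqrt{D}$; hence $\mathrm{sgn}\,\delta(\mathfrak{a})=1$ and $r_2r_2'=9$.

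Next I would determine the matrix $M=\begin{bmatrix}a&b\\c&d\end{bmatrix}$ with $\epsilon_D\begin{bmatrix}r_1\\r_2\end{bmatrix}=M\begin{bmatrix}r_1\\r_2\end{bmatrix}$. Expanding $\epsilon_D r_1$ and $\epsilon_D r_2$ from $\epsilon_D=\frac{1}{2}[(9m+2)+3\sqrt{D}]$ and $D=9m^2+4m$ and comparing rational and irrational parts yields
$$a=\frac{9m+5}{2},\qquad b=\frac{9m^2+4m-1}{4},\qquad c=9,\qquad d=\frac{9m-1}{2},$$
and as a check $a+d=9m+2$ and $ad-bc=1=N(\epsilon_D)$. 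Since $m$ is odd, $a\equiv 7\pmod 9$ and $d\equiv 4\pmod 9$. Notice that, unlike in Propositions \ref{P1} and \ref{P2}, neither residue is $\equiv 1$, so Lemma \ref{L2} does not apply here and the relevant Dedekind sums have to be computed directly.

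That computation is the main obstacle. Using Lemma \ref{L4} together with the explicit polynomials $P_1$, $P_2$, $P_3$, I would evaluate $\sum_{j\bmod 9}P_1(j/9)P_3(7j/9)$, $\sum_{j\bmod 9}P_1(j/9)P_3(4j/9)$, $\sum_{j\bmod 9}P_2(j/9)P_2(7j/9)$ and $\sum_{j\bmod 9}P_2(j/9)P_2(4j/9)$ --- a finite but slightly tedious evaluation of $P_1$, $P_2$, $P_3$ at the nine points $j/9$. This gives
$$S^3(7,9)=\frac{62}{2187},\qquad S^3(4,9)=\frac{8}{2187},\qquad S^2(7,9)=S^2(4,9)=\frac{203}{8748}.$$

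Finally I would substitute $\mathrm{sgn}\,\delta(\mathfrak{a})=1$, $r_2r_2'=9$, $N(\mathfrak{a})=3$, $c=9$, $N(\epsilon_D)=1$, $a+d=9m+2$, and these four sums into Lang's formula (\ref{HLF}). Using $729/2187=1/3$ and $729/8748=1/12$, the bracketed expression collapses to $(9m+2)^3-6(9m+2)+3045(a+d)-5600=(9m+2)^3+3039(9m+2)-5600=729m^3+486m^2+27459m+486$; multiplying by the prefactor $\dfrac{9}{360\cdot 3\cdot 9^3}=\dfrac{1}{87480}$ and cancelling the common factor $243$ then produces $\zeta_K(-1,\mathfrak{U})=\frac{1}{360}(3m^3+2m^2+113m+2)$, which is the assertion.
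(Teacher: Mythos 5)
Your proposal is correct and follows essentially the same route as the paper: the integral basis from Lemma \ref{IB}(ii), the matrix entries $a=\frac{9m+5}{2}$, $b=\frac{9m^2+4m-1}{4}$, $c=9$, $d=\frac{9m-1}{2}$ with $a\equiv 7$, $d\equiv 4\pmod 9$, the direct evaluation of $S^3(7,9)$, $S^3(4,9)$, $S^2(7,9)=S^2(4,9)$ via Lemma \ref{L4}, and substitution into Lang's formula (\ref{HLF}). The stated Dedekind sum values and the final simplification to $\frac{1}{360}(3m^3+2m^2+113m+2)$ all check out.
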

\begin{proof}
By Lemma \ref{IB}, $r_1=\frac{1+\sqrt{D}}{2},r_2=3$ is an integral basis of the ideal $\mathfrak{u}=<3,\frac{1+\sqrt{D}}{2}>$. We have $\delta(\mathfrak{u})=3\sqrt{D}$.\\
If $M=$ $\begin{bmatrix} a & b \\ c & d \end{bmatrix}$ is an integral matrix satisfying
$$ \epsilon_D \begin{bmatrix}
r_1 \\ r_2
\end{bmatrix}= M  \begin{bmatrix}
r_1 \\ r_2
\end{bmatrix}.$$
Then we obtain $a=\frac{9m+5}{2} , b=\frac{9m^2+4m-1}{4} ,c=9 , d=\frac{9m-1}{2} $. From this it is clear that $a \equiv 7 \pmod c, d \equiv 4 \pmod c$. Hence, by definition, we have $S^3(a,c)=S^3(7,9)$, $S^3(d,c)=S^3(4,9)$ and $S^2(a,c)=S^2(7,9), S^2(d,c)=S^2(4,9)$. By Lemma \ref{L4}, we obtain
$$S^3(7,9)=\frac{62}{3.9^3}, \quad S^3(4,9)=\frac{8}{3.9^3}, \quad S^2(7,9)=\frac{203}{12.9^3} \mbox{ and } \quad S^2(4,9)=\frac{203}{12.9^3}.$$
Substituting these in (\ref{HLF}) gives
$$\zeta_K(-1, \mathfrak{U})=\frac{1}{360}(3m^3+2m^2+113m+2).$$
\end{proof}

%\begin{prop}\label{P4}
%If $m \equiv 2 \pmod 3$ and $\mathfrak{U}$ is an ideal class in the class group of the field $K$ such that $<3,\frac{3+3\sqrt{D}}{2}> \in \mathfrak{U}^{-1}$ then
%$$\zeta_K(-1, \mathfrak{U})=\frac{1}{9 \times 120}(9m^3+6m^2+19m+6).$$
%\end{prop}
%\begin{proof}
%By Lemma \ref{IB}, $r_1=\frac{3+3\sqrt{D}}{2},r_2=3$ is an integral basis of the ideal $\mathfrak{u}=<3,\frac{3+3\sqrt{D}}{2}>$. We have $\delta(\mathfrak{u})=9\sqrt{D}$.\\
%If $M=$ $\begin{bmatrix} a & b \\ c & d \end{bmatrix}$ is an integral matrix satisfying
%$$ \epsilon_D \begin{bmatrix}
%r_1 \\ r_2
%\end{bmatrix}= M  \begin{bmatrix}
%r_1 \\ r_2
%\end{bmatrix}.$$
%Then we obtain $a=\frac{9m+5}{2} , b=\frac{3D-3}{4} ,c=3 , d=\frac{9m-1}{2} $. Thus, all the parameters are same as in the Proposition \ref{P1}, except $N(\mathfrak{u})=9$. Consequently we obtain the proposition.
%\end{proof}

\begin{prop}\label{P5}
Let $p$ be a prime dividing $m$. If $\mathfrak{P}$ is an ideal class in the class group of the field $K$ such that $<p,\frac{p+\sqrt{D}}{2}> \in \mathfrak{P}^{-1}$ then
$$\zeta_K(-1, \mathfrak{P})=\frac{1}{120\times p^2}(9m^3+6m^2+9mp^4+10mp^2+6p^4).$$
\end{prop}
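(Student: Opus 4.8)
The plan is to follow the same template as the proofs of Propositions \ref{P1}, \ref{P2}, and \ref{P3}: pick a convenient integral ideal in the class $\mathfrak{P}^{-1}$, compute the action of the fundamental unit $\epsilon_D$ on an integral basis to get the matrix $M = \begin{bmatrix} a & b \\ c & d \end{bmatrix}$, determine the residues of $a$ and $d$ modulo $c$, evaluate the relevant generalized Dedekind sums, and substitute into Lang's formula \eqref{HLF}. Concretely, by Lemma \ref{IB}(iv) the ideal $\mathfrak{p} = \langle p, \frac{p+\sqrt{D}}{2}\rangle$ lies in $\mathfrak{P}^{-1}$ and has integral basis $r_1 = \frac{p+\sqrt{D}}{2}$, $r_2 = p$, with $N(\mathfrak{p}) = p$ and $\delta(\mathfrak{p}) = r_1 r_2' - r_1' r_2 = p\sqrt{D}$ (so $\operatorname{sgn}\delta(\mathfrak{p}) = 1$ since $m>0$).

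First I would compute $M$. Writing $\epsilon_D = \frac{1}{2}[(9m+2) + 3\sqrt{D}]$ and using $\sqrt{D} = 2r_1 - p$, one expands $\epsilon_D r_1$ and $\epsilon_D r_2$ as $\mathbb{Z}$-linear combinations of $r_1, r_2$. This gives $c = 3p$ (the coefficient of $r_1$ in $\epsilon_D r_2 = \epsilon_D p$, coming from $3p \cdot \frac{\sqrt{D}}{?}$ — more precisely $\epsilon_D p = \frac{p(9m+2)}{2} + \frac{3p}{2}\sqrt{D} = 3p\, r_1 + (\tfrac{p(9m+2)}{2} - \tfrac{3p^2}{2})\cdot\tfrac{?}{}$, so $d = \frac{p(9m+2) - 3p^2}{2} = \frac{9m+2-3p}{2}\cdot p$... wait, $d$ must be an integer, and indeed since $p \mid m$ and $m$ is odd, $9m+2-3p$ is... one checks $d = \frac{(9m+2)p - 3p^2}{2}$; this needs care but is routine). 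Similarly $a$ comes from $\epsilon_D r_1$. The key arithmetic fact to extract is the residues $a \bmod 3p$ and $d \bmod 3p$; I expect, by analogy with Proposition \ref{P1} where $c = 3$ gave $a \equiv d \equiv 1$, that here we land on residues for which the Dedekind sums split via the Chinese Remainder Theorem into the mod-$3$ and mod-$p$ pieces already computed in Lemmas \ref{L2} and \ref{L4}.

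The main technical step — and the likely obstacle — is evaluating $S^3(a, 3p)$ and $S^2(a, 3p)$ (and the same for $d$). Unlike the earlier propositions where $c \in \{3, 9\}$ was a fixed small number, here $c = 3p$ varies with $p$, so I cannot just quote a numerical value from Lemma \ref{L2}. Instead I would use the reciprocity/splitting behaviour of generalized Dedekind sums: when $\gcd(a, c) = 1$ and $c = c_1 c_2$ with $\gcd(c_1,c_2)=1$, the sum $S^r_{2n}(a,c)$ decomposes (via $j \bmod c \leftrightarrow (j \bmod c_1, j \bmod c_2)$ and the distribution formula for $P_t$) into a combination of $S^r_{2n}(\bar a, c_1)$-type and $S^r_{2n}(\bar a, c_2)$-type terms plus lower-order corrections. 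Applying this with $c_1 = 3$, $c_2 = p$ reduces everything to $S^i(\pm 1, 3)$, $S^i(\pm 1, p)$ — and $S^3(\pm 1, p)$, $S^2(\pm 1, p)$ are given in closed form by Lemma \ref{L2}. Plugging the resulting expressions into \eqref{HLF}, the prefactor $\frac{r_2 r_2'}{N(\mathfrak{p}) c^3} = \frac{p^2}{p \cdot 27 p^3} = \frac{1}{27 p^2}$ combines with the bracket to produce, after simplification, $\frac{1}{120 p^2}(9m^3 + 6m^2 + 9mp^4 + 10mp^2 + 6p^4)$. A useful sanity check: setting $p$ formally so that the "ideal above $p$" collapses should recover Proposition \ref{P1}; and indeed when $p^2 \mid$ nothing extra, the terms $9mp^4 + 10mp^2 + 6p^4$ over $p^2$ versus $19m + 6$ in Proposition \ref{P1} suggests the bracket structure $(a+d)^3 - 6(a+d)$ scales correctly — I would verify the final polynomial against the $m \equiv 0 \pmod 3$, $p = 3$ consistency with Propositions \ref{P1}–\ref{P2} as a numerical check before finalizing.
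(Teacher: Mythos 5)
Your overall template (the ideal $\mathfrak{p}=\langle p,\tfrac{p+\sqrt D}{2}\rangle$ from Lemma \ref{IB}, the basis $r_1=\tfrac{p+\sqrt D}{2}$, $r_2=p$, $\delta(\mathfrak{p})=p\sqrt D$, $N(\mathfrak{p})=p$, $c=3p$, then Lang's formula) is the same as the paper's, but there is a genuine gap at precisely the step you flag as the main obstacle. You never determine $a$ and $d$ modulo $c=3p$; you only ``expect'' favourable residues by analogy with Proposition \ref{P1}. This is exactly where the hypothesis $p\mid m$ does all the work: one finds $a=\tfrac{9m+2+3p}{2}$ and $d=\tfrac{9m+2-3p}{2}$ (your displayed expression for $d$ carries an extra factor of $p$, since what you computed is $dp=\tfrac{(9m+2)p-3p^2}{2}$), and writing $m=pm_1$ with $m_1$ odd gives $a-1=3p\cdot\tfrac{3m_1+1}{2}$ and $d-1=3p\cdot\tfrac{3m_1-1}{2}$, hence $a\equiv d\equiv 1\pmod{3p}$. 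Without this congruence the argument does not get started.

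Second, once the congruence is in hand, no Chinese-remainder splitting of generalized Dedekind sums is needed, and indeed none is available: Lemma \ref{L2} is stated for an \emph{arbitrary} modulus, so it directly gives the closed forms $S^3(1,3p)=\tfrac{-(3p)^4+5(3p)^2-4}{120(3p)^3}$ and $S^2(1,3p)=\tfrac{(3p)^4+10(3p)^2-6}{180(3p)^3}$, which substituted into (\ref{HLF}) yield the stated polynomial. The decomposition of $S^r_{2n}(a,c_1c_2)$ into mod-$c_1$ and mod-$c_2$ pieces ``plus lower-order corrections'' that you invoke is not a lemma proved or cited in the paper, nor a routine identity for these sums, so as written your proposal defers the entire evaluation to an unproved auxiliary result. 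Establish $a\equiv d\equiv 1\pmod{3p}$ and read Lemma \ref{L2} with modulus $3p$, and your computation collapses to the paper's proof.
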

\begin{proof}
By Lemma \ref{IB}, $r_1=\frac{p+\sqrt{D}}{2},r_2=p$ is an integral basis of the ideal $\mathfrak{p}=<p,\frac{p+\sqrt{D}}{2}>$. We have $\delta(\mathfrak{u})=p\sqrt{D}$.\\
If $M=$ $\begin{bmatrix} a & b \\ c & d \end{bmatrix}$ is an integral matrix satisfying
$$ \epsilon_D \begin{bmatrix}
r_1 \\ r_2
\end{bmatrix}= M  \begin{bmatrix}
r_1 \\ r_2
\end{bmatrix}.$$
Then we obtain $a=\frac{9m+2+3p}{2} , b=\frac{3D-3p^2}{4p} ,c=3p , d=\frac{9m+2-3p}{2} $. Clearly $a \equiv 1 \pmod c$ and $d \equiv 1 \pmod c$. Consequently,
$$S^3(a,c)=S^3(d,c)=S^3(1,3p)=\frac{-(3p)^4+5(3p)^2-4}{120(3p)^3}$$
and
$$S^2(a,c)=S^2(d,c)=S^2(1,3p)=\frac{(3p)^4+10(3p)^2-6}{180(3p)^3}.$$
Substituting these in (\ref{HLF}) gives
$$\zeta_K(-1, \mathfrak{P})=\frac{1}{120\times p^2}(9m^3+6m^2+9mp^4+10mp^2+6p^4).$$
\end{proof}

\begin{proof}
(Theorem \ref{C2}) If $m= 1$ then $K=\mathbb{Q}(\sqrt{13})$ and its class number is one. Now we assume that $m>1$. If $m$ is not a prime then by Theorem \ref{HH} it follows that $h_K>1$. Thus we can assume that $m=p$ for some odd prime $p$. 

From Proposition \ref{P1}, Proposition \ref{P2}, Proposition \ref{P3}, we see that 
$$\zeta_K(-1,\mathfrak{C}) \neq \zeta_K(-1,\mathfrak{U}), \quad \mbox{ unless } m=3.$$
%{\bf Shouldn't we have $\zeta_K(-1,\mathfrak{C}) = \zeta_K(-1,\mathfrak{U})$ for $m=3$? But our computations do not agree with this, check the mistake.}
This proves that $h_K\geq 2$ whenever $m>3$. For $m=3$, we have $K=\mathbb{Q}(\sqrt{93})$ and $h_K=1$. This proves the theorem.
%Now either $m \equiv 1 \pmod 3$ or $m\equiv 2 \pmod 3$. Let $\mathfrak{C}$ be the trivial ideal class in the class group of $K$ and $\mathfrak{U}$ be the ideal class in $K$ containing the prime above $3$. Using Proposition \ref{P1}, Proposition \ref{P3} and Proposition \ref{P4} we see that
%$$\zeta_K(-1,\mathfrak{C}) \neq \zeta_K(-1,\mathfrak{U}).$$
%As a consequence the ideal classes $\mathfrak{C}$ and $\mathfrak{U}$ are different and we have $h_K \geq 2$.
\end{proof}

%If we assume that $m$ has atleast $3$ prime factors, then we obtain following lower bound on the class number $h_K$. \begin{cor}
%The following lower bound on the class number $h_K$ holds.
%$$h_K \geq 1+\mathcal{N}.$$
%Here $\mathcal{N}$ is the number of distinct prime factors of $m$.
%\end{cor}
%
%\begin{proof}
%
%\end{proof}
%
%
%
%
%
%
%
%
%
%\begin{prop}
%Let $m$ be an odd square-free integer and $D=9m^2+4m$. Then we have 
%\begin{equation}\label{E1}
%\sum_{\substack{ |t|< \sqrt{D} \\t^2 \equiv D\ (\bmod 4)}} \sigma\left( \frac{D-t^2}{4}\right)\geq \frac{1}{2}(9m^3+6m^2+19m+6),  
%\end{equation}
%where $\sigma(n)$ denotes the sum of divisors of $n$.
%\end{prop}
\begin{proof}
(Theorem \ref{C3})
We have already seen that the ideal classes $\mathfrak{C}$ and $\mathfrak{U}$ are different elements in the class group of $K$. To establish Theorem \ref{C3}, we obtain a lower bound on 
$$
\sum_{\substack{ |t|< \sqrt{D} \\t^2 \equiv D\ (\bmod 4)}} \sigma\left( \frac{D-t^2}{4}\right).
$$
Since
	\begin{align*}
		(&3m)^2<D<(3m+1)^2,
	\end{align*}
we have
\begin{align*}
3m=\lfloor\sqrt{D}\rfloor<\sqrt{D}<(3m+1).
	\end{align*}
Further $t^2\equiv D (\bmod4) $ holds if and only if $t$ is odd. Thus, we conclude that $t$ runs over integers
	\begin{center}
			$ t=3m-2s$, for  $s=0,1,2,...,\left(\frac{3m-1}{2}\right)$.
	\end{center}
For $t=3m-2s$ we have $\left(\frac{D-t^2}{4}\right)=-s^2+3ms+m$.	 Thus
\begin{equation}\label{E1}
\sum_{\substack{ |t|< \sqrt{D} \\t^2 \equiv D\ (\bmod 4)}} \sigma\left( \frac{D-t^2}{4}\right)=2\left(\sum_{\substack{ s=0}}^ {\frac{(3m-1)}{2}} \sigma(-s^2+3ms+m)\right).
\end{equation}
For each $s$ considering the trivial divisors of $-s^2+3ms+m$ we obtain following contribution in the right side of (\ref{E1})
%To obtain lower bound on $$
%\sum_{\substack{ |t|< \sqrt{D} \\t^2 \equiv D\ (\bmod 4)}} \sigma\left( \frac{D-t^2}{4}\right)
%$$
%we need lower bounds on $\sigma(-s^2+3ms+m)$. For each $s$,  and by considering trivial divisors we obtain,
%\begin{equation*}
%\sigma\left( \frac{D-t^2}{4}\right)\geq (-s^2+3ms+m)+1  \quad \forall s=0,1,...,\frac{3m-1}{2} .
%\end{equation*}
%Thus, the trivial divisors contribute
\begin{equation}\label{E2}
2\left(\sum_{\substack{s=0}}^{\frac{3m-1}{2}}1+m(1+3s)-s^2\right)=\frac{1}{2}(9m^3+6m^2+7m+2).
\end{equation}

Since $m \equiv 1 \pmod 3$, we see that $3$ is a divisor of $-s^2+3ms+m$ whenever $s\equiv 1 \pmod 3$ or $s \equiv 2 \pmod 3$. Thus, for each $t=3m-2s$ with $s\equiv 1 \pmod 3$ or $s \equiv 2 \pmod 3$ we get extra contribution of $3+\frac{-s^2+3ms+m}{3}$ in the right side of (\ref{E1}). The total contribution due to these factors, in the right side of (\ref{E1}) is\\
\begin{equation}\label{E3}
2\left(\sum_{s=0, 3 \nmid s}^{\frac{3m-1}{2}}3+\frac{m(1+3s)-s^2}{3}\right)=m^3+\frac{2m^2}{3}+\frac{19m}{3}.
\end{equation}
Further, for $s=m$ and $t=m$ we get extra contribution of $m+(2m+1)$ in the right side of (\ref{E1}).\\ 
%\begin{equation}\label{E3}
%\sigma\left( \frac{D-t^2}{4}\right)\geq \sigma(-m^2+3m^2+m)  \geq  (-m^2+3m^2+m)+1+m+(2m+1) .
%\end{equation}
Taking in account all these contributions we get
\begin{equation}\label{E4}
	\sum_{\substack{ |t|< \sqrt{D} \\t^2 \equiv D\ (\bmod 4)}} \sigma\left( \frac{D-t^2}{4}\right)\geq 
	\frac{1}{2}(9m^3+6m^2+7m+2)+(m^3+\frac{2m^2}{3}+\frac{19m}{3})+2(3m+1).
\end{equation}
From Theorem \ref{DZ}, we get 
\begin{equation}\label{E5}
\zeta_K(-1) \geq \frac{1}{60}\left( \frac{1}{2}(9m^3+6m^2+13m+4)+(m^3+\frac{2m^2}{3}+\frac{19m}{3})+2(3m+1) \right)
\end{equation}
Using Proposition \ref{P1} and Proposition \ref{P3}, we see that
$$\zeta_K(-1)>\zeta_K(-1,\mathfrak{C})+\zeta_K(-1,\mathfrak{U}),$$
whenever $m>4$. Thus $h_K \geq 3$ whenever $m> 4$. 
\end{proof}

\begin{proof}
(Theorem \ref{C4}) As noted in the proof of Theorem \ref{C2}, the ideal classes $\mathfrak{C}$ and $\mathfrak{U}$ are two distinct elements in the class group of $K$. Let $p>3$ be a prime divisor of $m$. Using Proposition \ref{P1} and Proposition \ref{P2} we note that $\zeta_K(-1,\mathfrak{C})=\zeta_K(-1, \mathfrak{P})$ gives
$$p^2(9m^3+6m^2+19m+6)=(9m^3+6m^2+9mp^4+10mp^2+6p^4).$$
Simplifying this we obtain $$(9m^3+6m^2-9mp^2-6p^2)(p^2-1)=0.$$
This gives $9m^3+6m^2-9mp^2-6p^2=0$ which leads to $m=p$. But this is not possible as $m$ has at least three distinct prime factors. Consequently the ideal classes $\mathfrak{C}$ and $\mathfrak{P}$ are different.\\

Similarly we see that $\zeta_K(-1,\mathfrak{U}) \neq \zeta_K(-1,\mathfrak{P})$. Further, if $p_1$, $p_2$ are two distinct primes dividing $m$ then we see that $\zeta_K(-1,\mathfrak{P}_1) \neq \zeta_K(-1,\mathfrak{P}_2)$.\\

Thus we get a different ideal class in the class group of $K$ for each prime $p>3$ dividing $m$ in addition to ideal classes $\mathfrak{C}$ and $\mathfrak{U}$. This proves the theorem.

% then we claim that the ideal class $\mathfrak{P}$ is different than $\mathfrak{C},\mathfrak{U} $. 
%
%Let $p_1, p_2>3$ be two distinct prime divisors of $m$ and let $\mathfrak{P}_1, \mathfrak{P}_2$ be the ideal classes containing the prime ideals above $p_1, p_2$ respectively. We claim that $\mathfrak{P}_1 \neq \mathfrak{P}_2$. If we assume that the opposite holds, then we obtain $\zeta_K(-1,\mathfrak{P}_1)=\zeta_K(-1,\mathfrak{P}_2)$. From Proposition \ref{P5}, this gives
%$$\frac{1}{p_1^2}(729m^3+486m^2+729mp_1^4+810mp_1^2+486p_1^4)=\frac{1}{p_2^2}(729m^3+486m^2+729mp_2^4+810mp_2^2+486p_2^4).$$
\end{proof}

Before proving Theorem \ref{C5}, we need one more proposition about zeta values. We recall that $D=m(9m+4)$ and consider the case when $9m+4=q$ is an odd prime. Let $\mathfrak{q}$ denote the prime ideal in $K$ lying above $q$ and $\mathfrak{Q}$ be an ideal class such in the class group of $K$ such that $\mathfrak{q}$ lies in $\mathfrak{Q}^{-1}$.

\begin{prop}\label{P6}
Let $D=mq$, where $q=9m+4$ is an odd prime. With above notations,
$$\zeta_K(-1,\mathfrak{Q})=\frac{1}{120}(9m^3+6m^2+19m+6).$$
\end{prop}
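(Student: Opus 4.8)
The plan is to compute $\zeta_K(-1,\mathfrak{Q})$ via Lang's formula (Theorem~\ref{HL}) in exactly the same way as in the proofs of Propositions~\ref{P1}--\ref{P5}, working with a convenient integral ideal in the class $\mathfrak{Q}^{-1}$. Since $q=9m+4$ divides $D=mq$, the prime $q$ ramifies in $K$, and by a computation analogous to Lemma~\ref{IB}(v) we have $q\mathbb{O}_K=\mathfrak{q}^2$ with $\mathfrak{q}=\langle q,\frac{q+\sqrt{D}}{2}\rangle$ and integral basis $r_1=\frac{q+\sqrt{D}}{2}$, $r_2=q$. I would first record $N(\mathfrak{q})=q$, $\delta(\mathfrak{q})=q\sqrt{D}$, and $r_2r_2'=q^2$.

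Next I would determine the matrix $M=\begin{bmatrix}a&b\\c&d\end{bmatrix}$ with $\epsilon_D\begin{bmatrix}r_1\\r_2\end{bmatrix}=M\begin{bmatrix}r_1\\r_2\end{bmatrix}$, where $\epsilon_D=\frac12[(9m+2)+3\sqrt{D}]$. Writing $\epsilon_D r_1$ and $\epsilon_D r_2$ in terms of $r_1,r_2$ using $D=mq=m(9m+4)$, I expect to get $c$ a small multiple of $q$ (likely $c=3q$ again, since $\epsilon_D$ has $3\sqrt D$ in it and $\mathfrak q$ is the ramified prime above $q$), with $a\equiv d\equiv 1\pmod c$ — this congruence is the feature that made Propositions~\ref{P1}, \ref{P2}, \ref{P5} come out cleanly, and I would check it holds here too. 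Granting $a\equiv d\equiv 1\pmod{3q}$, Lemma~\ref{L2} gives $S^3(a,c)=S^3(d,c)=S^3(1,3q)=\frac{-(3q)^4+5(3q)^2-4}{120(3q)^3}$ and $S^2(a,c)=S^2(d,c)=\frac{(3q)^4+10(3q)^2-6}{180(3q)^3}$, just as in Proposition~\ref{P5} with $p$ replaced by $q$.

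Substituting all of this into \eqref{HLF} and simplifying — using $q=9m+4$ throughout to reduce everything to a polynomial in $m$ — should collapse the cubic-in-$m$ expression to $\frac{1}{120}(9m^3+6m^2+19m+6)$. The main obstacle is purely computational: getting the entries $a,b,c,d$ right (in particular confirming $c=3q$ and $a\equiv d\equiv1\pmod c$, rather than some other residues that would force the use of Lemma~\ref{L4} instead), and then carrying out the algebraic simplification with the substitution $q=9m+4$ without error. It is worth noting that the answer is \emph{identical} to $\zeta_K(-1,\mathfrak C)$ from Proposition~\ref{P1}; this coincidence is presumably the point of the proposition, and it is a useful sanity check — if the simplification does not reproduce $\frac{1}{120}(9m^3+6m^2+19m+6)$ exactly, an arithmetic slip has occurred. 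A secondary point to verify is that $\mathfrak q$ is genuinely an integral ideal in $\mathfrak Q^{-1}$ with the claimed basis, i.e. the analogue of Lemma~\ref{IB}(v) applies with $q=9m+4 \mid D$; since $q\equiv 3\pmod 4$ and $q\mid D$, ramification is automatic, so this is routine.
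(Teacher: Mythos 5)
Your setup agrees with the paper: you take $\mathfrak q=\langle q,\frac{q+\sqrt D}{2}\rangle$ with basis $r_1=\frac{q+\sqrt D}{2}$, $r_2=q$, $\delta(\mathfrak q)=q\sqrt D$, and the matrix works out to $a=\frac{9m+2+3q}{2}$, $b=\frac{3D-3q^2}{4q}$, $c=3q$, $d=\frac{9m+2-3q}{2}$, exactly as in the paper. The gap is the congruence you then ``grant'': $a\equiv d\equiv 1\pmod{3q}$ is false here. That congruence held in Proposition \ref{P5} precisely because there $p\mid m$; in the present situation $q=9m+4$ divides $D$ but does \emph{not} divide $m$, and substituting $9m=q-4$ gives $a=2q-1$ exactly and $d=-q-1\equiv 2q-1\pmod{3q}$. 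So Lemma \ref{L2} does not apply, and the transplant of Proposition \ref{P5} with $p$ replaced by $q$ would give $\frac{1}{120q^2}(9m^3+6m^2+9mq^4+10mq^2+6q^4)$, which is not the claimed value -- your own sanity check would flag the failure, but the proposal offers no route past it.

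Consequently the real content of the paper's proof, which your proposal leaves entirely undone, is the evaluation of the generalized Dedekind sums $S^2(2q-1,3q)$ and $S^3(2q-1,3q)$ via Lemma \ref{L4}. The paper splits the sum over $j$ modulo $3q$ into the classes $j\equiv 0,1,2\pmod 3$, determines the fractional parts $\{(2q-1)j/3q\}$ on each range (using $q\equiv 1\pmod 3$, which holds since $q=9m+4$), and evaluates the resulting polynomial power sums in $k$; this computation occupies the bulk of the proof together with the appendix and yields $S^2(2q-1,3q)=\frac{q^4+330q^2-160q-6}{4860\,q^3}$ and the corresponding value of $S^3(2q-1,3q)$, which substituted into (\ref{HLF}) give $\zeta_K(-1,\mathfrak Q)=\frac{1}{3^3\times 360}(q^3-6q^2+171q-166)=\frac{1}{120}(9m^3+6m^2+19m+6)$ after putting $q=9m+4$. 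Without carrying out this Dedekind-sum computation (or supplying some substitute for it), the proposition is not proved; identifying and executing that step is the essential missing idea in your proposal.
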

\begin{proof}
By Lemma \ref{IB}, $r_1=\frac{q+\sqrt{D}}{2},r_2=q$ is an integral basis of the ideal $\mathfrak{q}=<q,\frac{q+\sqrt{D}}{2}>$. We have $\delta(\mathfrak{q})=q\sqrt{D}$.\\
If $M=$ $\begin{bmatrix} a & b \\ c & d \end{bmatrix}$ is an integral matrix satisfying
$$ \epsilon_D \begin{bmatrix}
r_1 \\ r_2
\end{bmatrix}= M  \begin{bmatrix}
r_1 \\ r_2
\end{bmatrix}.$$
Then we obtain $a=\frac{9m+2+3q}{2} , b=\frac{3D-3q^2}{4q} ,c=3q , d=\frac{9m+2-3q}{2} $. Clearly $a \equiv 2q-1 \pmod c$ and $d \equiv 2q-1 \pmod c$. Consequently,
$$S^3(a,c)=S^3(d,c)=S^3(2q-1,3q), \quad S^2(a,c)=S^2(d,c)=S^2(2q-1,3q).$$
Now we compute $S^2(2q-1,3q)$ using Lemma \ref{L4}.
\begin{multline}\label{P6E1}
S^2(2q-1,3q)=\sum_{j=0}^{3q-1}P_2\left(\frac{j}{3q}\right)P_2\left(\frac{(2q-1)j}{3q}\right)\\
=\sum_{k=0}^{q-1}P_2\left(\frac{3k}{3q}\right)P_2\left(\frac{(2q-1)(3k)}{3q}\right)+\sum_{k=0}^{q-1}P_2\left(\frac{3k+1}{3q}\right)P_2\left(\frac{(2q-1)(3k+1)}{3q}\right)\\
+\sum_{k=0}^{q-1}P_2\left(\frac{3k+2}{3q}\right)P_2\left(\frac{(2q-1)(3k+2)}{3q}\right)
\end{multline}
Since $q \equiv 1 \pmod 3$, we observe that
\begin{multline*}
\{(3k)(2q-1)(3q)\}=(3q-3k)/3q \mbox{ for } k>0, \quad \{(3k+1)(2q-1)/3q\}=(-3k+2q-1)/3q \quad\\
 \mbox{ for }k \leq (2q-2)/3, \quad
 \{(3k+1)(2q-1)/3q\}=(-3k+5q-1)/3q \mbox{ for }k \geq (2q+1)/3,\\
  \{(3k+2)(2q-1)/3q\}=(-3k+q-2)/3q \mbox{ for } k \leq (q-4)/3\\
 \mbox{ and } \{(3k+2)(2q-1)/3q\}=(-3k+4q-2)/3q \mbox{ for } k \geq (q-1)/3.
\end{multline*}
Recall that $P_i(x)=P_i(\{x\})$ for $i=1,2,3$ and $x \in \mathbb{R}$. Hence we obtain
\begin{multline}\label{ES2}
S^2(2q-1,3q)=\sum_{k=0}^{q-1} P_2\left(\frac{3k}{3q}\right)P_2\left(\frac{3q-3k}{3q}\right)+\sum_{k=0}^{\frac{2q-2}{3}}P_2\left(\frac{3k+1}{3q}\right)P_2\left(\frac{-3k+2q-1}{3q}\right)\\
+\sum_{k=\frac{2q+1}{3}}^{q-1}P_2\left(\frac{3k+1}{3q}\right)P_2\left(\frac{-3k+5q-1}{3q}\right)+\sum_{k=0}^{\frac{q-4}{3}}P_2\left(\frac{3k+2}{3q}\right)P_2\left(\frac{-3k+q-2}{3q}\right)\\
+\sum_{k=\frac{q-1}{3}}^{q-1}P_2\left(\frac{3k+2}{3q}\right)P_2\left(\frac{-3k+4q-2}{3q}\right).\\
\end{multline}
Now, we evaluate each sum individually.
\begin{multline*}
\sum_{k=0}^{q-1} P_2\left(\frac{3k}{3q}\right)P_2\left(\frac{3q-3k}{3q}\right)=\sum_{k=0}^{q-1}\left(\frac{q^2-6kq+6k^2}{6q^2}\right) \left( \frac{q^2-6kq+6k^2}{6q^2}\right)\\
=\sum_{k=0}^{q-1}\left( \frac{q^4-12kq^3+48k^2q^2-72k^3q+36k^4}{36q^4}\right).
\end{multline*}
Now we use the formula for sum of fixed powers of consecutive numbers to obtain
\begin{equation}\label{ES21}
\sum_{k=0}^{q-1} P_2\left(\frac{3k}{3q}\right)P_2\left(\frac{3q-3k}{3q}\right)=\frac{q^5+10q^3-6q}{180q^4}.
\end{equation}
Next, we have
\begin{multline*}
\sum_{k=0}^{\frac{2q-2}{3}}P_2\left(\frac{3k+1}{3q}\right)P_2\left(\frac{-3k+2q-1}{3q}\right)\\
=\sum_{k=0}^{\frac{2q-2}{3}}\left(\frac{18k^2+(12-18q)k+2-6q+3q^2}{18q^2}\right) \left( \frac{18k^2+(12-6q)k+2-2q-q^2}{18q^2}\right)\\
=\sum_{k=0}^{\frac{2q-2}{3}}\left( \frac{324k^4+(432-432q)k^3+(144q^2-432q+216)k^2}{324q^4}\right)\qquad \qquad \qquad \qquad\\
+\left(\frac{(96q^2-144q+48)k+(4-16q+16q^2-3q^4  )}{324q^4}\right). \qquad \qquad \qquad \qquad \\
\end{multline*}
Using the formula for sum of fixed powers of consecutive numbers we obtain
\begin{equation}\label{ES22}
\sum_{k=0}^{\frac{2q-2}{3}} P_2\left(\frac{3k+1}{3q}\right)P_2\left(\frac{-3k+2q-1}{3q}\right)=\frac{-26q^5-45q^4-160q^2+156q+120}{324 \times 45 \times q^4}.
\end{equation}
Now we evaluate the sum
\begin{multline*}
\sum_{k=\frac{2q+1}{3}}^{q-1}P_2\left(\frac{3k+1}{3q}\right)P_2\left(\frac{-3k+5q-1}{3q}\right)\\
=\sum_{k=\frac{2q+1}{3}}^{q-1}\left(\frac{18k^2+(12-18q)k+2-6q+3q^2}{18q^2}\right)\left( \frac{18k^2+(12-42q)k+2-14q+23q^2}{18q^2}\right)\\
=\sum_{k=\frac{2q+1}{3}}^{q-1}\left( \frac{324k^4+(432-1080q)k^3+(1224q^2-1080q+216)k^2}{324q^4}\right) \qquad\\+\sum_{k=\frac{2q+1}{3}}^{q-1}\left( \frac{(816q^2-540q^3-360q+48)k+(69q^4-180q^3+136q^2-40q+4)}{324q^4}\right).
\end{multline*}
From the formulas for sum of fixed powers of consecutive numbers we get
\begin{equation}\label{ES23}
\sum_{k=\frac{2q+1}{3}}^{q-1}P_2\left(\frac{3k+1}{3q}\right)P_2\left(\frac{-3k+5q-1}{3q}\right)=\frac{-13q^5+45q^4+90q^3-80q^2+78q-120}{324 \times 45 \times q^4}.
\end{equation}
Next we look at the sum
\begin{multline*}
\sum_{k=0}^{\frac{q-4}{3}}P_2\left(\frac{3k+2}{3q}\right)P_2\left(\frac{-3k+q-2}{3q}\right)\\
=\sum_{k=0}^{\frac{q-4}{3}}\left( \frac{18k^2+(24-18q)k+8-12q+3q^2}{18q^2}\right) \left( \frac{18k^2+(24+6q)k+8+4q-q^2}{18q^2} \right)\\
=\sum_{k=0}^{\frac{q-4}{3}}\left( \frac{324k^4+(864-216q)k^3+(864-432q-72q^2)k^2}{324q^4}\right) \quad\\
+\left(\frac{(36q^3-96q^2-288q+384)k+(64-64q-32q^2+24q^3-3q^4)}{324q^4}\right).
\end{multline*}
Using the formula for sum of fixed powers of consecutive numbers we get
\begin{equation}\label{ES24}
\sum_{k=0}^{\frac{q-4}{3}}P_2\left(\frac{3k+2}{3q}\right)P_2\left(\frac{-3k+q-2}{3q}\right)=\frac{-26q^5+90q^4+180q^3-160q^2+156q-240}{324 \times 90 \times q^4}.
\end{equation}
The last sum to be handled is
\begin{multline*}
\sum_{k=\frac{q-1}{3}}^{q-1}P_2\left(\frac{3k+2}{3q}\right)P_2\left(\frac{-3k+4q-2}{3q}\right)\\
=\sum_{k=\frac{q-1}{3}}^{q-1} \left( \frac{18k^2+(24-18q)k+8-12q+3q^2}{18q^2}\right) \left( \frac{18k^2+(24-30q)k+8-20q+11q^2}{18q^2} \right)\\
=\sum_{k=\frac{q-1}{3}}^{q-1}\left( \frac{324k^4+864(1-q)k^3+(729q^2-1728q+864)k^2}{324q^4}\right) \qquad \qquad \qquad \qquad \qquad \\
+\left(\frac{384-1152q+1056q^2-288q^3)k+(33q^4-192q^3+352q^2-256q+64)}{324q^4}\right). \quad 
\end{multline*}
This leads to
\begin{equation}\label{ES25}
\sum_{k=\frac{q-1}{3}}^{q-1}P_2\left(\frac{3k+2}{3q}\right)P_2\left(\frac{-3k+4q-2}{3q}\right)=\frac{-26q^5-45q^3-160q^2+156q+120}{324 \times 45 \times q^4}.
\end{equation}
Using equations (\ref{ES21}), (\ref{ES22}), (\ref{ES23}), (\ref{ES24}) and (\ref{ES25}) in equation (\ref{ES2}) we get
$$S^2(2q-1,3q)=\frac{q^4+330q^2-160q-6}{324 \times 15 \times q^3}.$$
Similar computations yield
$$S^3(2q-1,3q)=\frac{q^4+40q^3-160q^2+80q+4}{324 \times 10 \times q^3}   .$$
Using these values in (\ref{HLF}) gives
$$\zeta_K(-1,\mathfrak{Q})=\frac{1}{3^3\times 360}(q^3-6q^2+171q-166).$$
Since $q=9m+4$ we get
$$\zeta_K(-1,\mathfrak{Q})=\frac{1}{120}(9m^3+6m^2+19m+6).$$

\end{proof}

\begin{proof}
(Theorem \ref{C5}) We consider the quadratic field $K=\mathbb{Q}(\sqrt{9m^2+4m})$. Let $\mathfrak{C}$ and $\mathfrak{Q}$ denote the principle ideal class and the ideal class containing prime ideal $\mathfrak{q}$ respectively. By Proposition \ref{P1} and Proposition \ref{P6} we have
$$\zeta_K(-1,\mathfrak{C})=\zeta_K(-1,\mathfrak{Q}).$$
From a result of Lang (see section 4 of \cite{HL}) we conclude that $\mathfrak{C}=\mathfrak{Q}$. Consequently the ideal $\mathfrak{q} $ is principal. This gives 
$$\mathfrak{q}=<(x+\sqrt{D}y)/2 >\quad \mbox{ for some integers }x,y.$$
Taking norm of both the sides gives
$$4q=x^2-Dy^2.$$
This proves the theorem.
\end{proof}

%We remark that the inequality $(\ref{E1})$ is tight if and only if the polynomial $f(X)=X^2-3mX-m$ takes prime values for $X \in \{0,1, \ldots, \frac{3m-1}{2}\} \setminus \{m\}$ and both $m, 2m+1$ are prime. However, for $X=m$, $f(X)$ is not a prime whenever $m>1$. Combining this with Theorem \ref{DZ} and Proposition \ref{P1} we get the following corollary.
%
%\begin{cor}
%Let $m>1$ be an odd square-free integer. Then the class number of $K=\mathbb{Q}(\sqrt{9m^2+4m})$ is one if and only if $f(X)=X^2-3mX-m$ takes prime values for $X \in \{0,1, \ldots, \frac{3m-1}{2}\} \setminus \{m\}$ and both $m,2m+1$ are prime.
%\end{cor}
%
%In light of the result of Biro and Lapkova \cite{BL}, we conclude that for $m>...$ whenever both $m$ and $2m+1$ are primes the polynomial $X^2-3mx-m$ takes a composite value for some $X \in \{0,1, \ldots, \frac{3m_1}{2}\}$.\\
%
%Corollary \ref{CNT1} and Corollary \ref{CNT2} follows from the observation that $3$ is always a divisor of $s^2-3ms+m$ whenever $m \equiv 1 \pmod 3$ and $s^2 \equiv 1 \pmod 3$.\\

%{\bf Now we discuss some computation/conclusion.}
%
%Can we say that we can obtain all class number one fields in a family if the fundamental unit of the family is known?

We end with tables of real quadratic fields in the family $\mathbb{Q}(\sqrt{9m^2+4m})$ and their class number. We consider $m$ in the range $[-160,160]$ and divide them in cases depending on congruency class of $m$ modulo $3$ and consider only those $D$ which are square-free.\\

\newpage

\begin{table}[H]
%\parbox{00.32\textwidth}{
\parbox[t]{0.32\linewidth}{
\centering
	\caption{For $m\equiv0(\bmod3)$}\label{tab:1}
		\begin{tabular}[t]{|c|c|c|}
			\hline
			$m$ & $D$& $h_K$\\ \hline			
%			-195    &       341445  &24\\
%			-183    &       300669  &20\\
%			-177    &       281253  &16\\
%			-165    &       244365  &28\\
			-159    &       226893  &22\\
		-141    &       178365  &16\\
		-129    &       149253  &12\\
		-123    &       135669  &30\\
		-111    &       110445  &20\\
		-105    &       98805   &16\\
			-87     &       67773   &8\\
			-69     &       42573   &6\\
			-57     &       29013   &6\\
			-51     &       23205   &8\\
			-39     &       13533   &8\\
			-33     &       9669    &10\\
			-21     &       3885    &4\\
			-15     &       1965    &2\\
			-3      &       69      &1\\
		%	0       &       0       &0\\

			3       &       93      &1\\
			15      &       2085    &2\\
			21      &       4053    &4\\
			33      &       9933    &4\\
			39      &       13845   &4\\
			51      &       23613   &6\\
			57      &       29469   &12\\
			87      &       68469   &18\\
			105     &       99645   &16\\
		111     &       111333  &12\\
		123     &       136653  &16\\
		129     &       150285  &24\\
		141     &       179493  &12\\
			159     &       228165  &24\\
%			165     &       245685  &16\\
%			177     &       282669  &32\\
%			183     &       302133  &16\\
%			195     &       343005  &28\\
			\hline
		\end{tabular}
}
\hfill
%\begin{table}[H]	
\parbox[t]{0.32\linewidth}{
	\caption{For $m\equiv1(\bmod3)$}\label{tab:2}
	\centering	
%\caption{For $m\equiv 1 \pmod 3$.}	
		\begin{tabular}[t]{|c|c|c|}
			\hline
			$m$ & $D$& $h_K$\\ \hline
%			-197    &       348493  &20\\
%			-185    &       307285  &32\\
%			-179    &       287653  &31\\
%			-173    &       268669  &62\\
%			-167    &       250333  &21\\
			-155    &       215605  &28\\
			-149    &       199213  &28\\
			-143    &       183469  &30\\
			-137    &       168373  &24\\
			-119    &       126973  &20\\
			-113    &       114469  &32\\
			-107    &       102613  &24\\
			-101    &       91405   &16\\
			-95     &       80845   &20\\
			-89     &       70933   &24\\
			-83     &       61669   &23\\
			-77     &       53053   &16\\
			-71     &       45085   &14\\
			-65     &       37765   &16\\
			-59     &       31093   &10\\
			-53     &       25069   &16\\
			-47     &       19693   &11\\
			-41     &       14965   &8\\
			-35     &       10885   &10\\
			-29     &       7453    &6\\
			-23     &       4669    &8\\
			-17     &       2533    &4\\
			-11     &       1045    &4\\
			-5      &       205     &2\\
			1       &       13      &1\\
			7       &       469     &3\\
			31      &       8773    &7\\
			37      &       12469   &14\\
			43      &       16813   &10\\
			55      &       27445   &12\\
			61      &       33733   &10\\
			67      &       40669   &13\\
			73      &       48253   &12\\
			79      &       56485   &20\\
			85      &       65365   &20\\
			91      &       74893   &18\\
			97      &       85069   &24\\
			109     &       107365  &20\\
			115     &       119485  &30\\
			127     &       145669  &30\\
			133     &       159733  &24\\
			139     &       174445  &28\\
			145     &       189805  &24\\
			151     &       205813  &30\\
		157     &       222469  &52\\
%			163     &       239773  &31\\
%			181     &       295573  &30\\
%			187     &       315469  &48\\
%			193     &       336013  &32\\
%			199     &       357205  &48\\
			\hline
		\end{tabular}
}
\hfill
%	\label{tab:forpol}
%\quad
\parbox[t]{0.32\linewidth}{
%\begin{table}[H]
\caption{For $m\equiv2(\bmod3)$}\label{tab:3}
\centering
%\caption{For $m\equiv 2 \pmod 3$.}
	\begin{tabular}[t]{|c|c|c|}
		\hline
		$m$ & $D$& $h_K$\\ \hline
%		-199    &       355613  &13\\
%		-193    &       334469  &28\\
%		-187    &       313973  &20\\
%		-163    &       238469  &16\\
 	-157    &       221213  &14\\
	-151    &       204605  &16\\
		-145    &       188645  &16\\
		-139    &       173333  &10\\
		-133    &       158669  &18\\
		-127    &       144653  &12\\
		-115    &       118565  &10\\
		-109    &       106493  &14\\
		-103    &       95069   &14\\
		-97     &       84293   &8\\
		-91     &       74165   &8\\
		-85     &       64685   &12\\
		-79     &       55853   &6\\
		-73     &       47669   &8\\
		-67     &       40133   &9\\
		-61     &       33245   &8\\
		-55     &       27005   &8\\
		-43     &       16469   &5\\
		-37     &       12173   &4\\
		-19     &       3173    &3\\
		-13     &       1469    &2\\
		-7      &       413     &1\\
		-1      &       5       &1\\
		11      &       1133    &1\\
		17      &       2669    &4\\
		23      &       4853    &3\\
		29      &       7685    &4\\
		35      &       11165   &4\\
		41      &       15293   &4\\
		47      &       20069   &10\\
		53      &       25493   &4\\
		59      &       31565   &10\\
		65      &       38285   &12\\
		71      &       45653   &7\\
		77      &       53669   &8\\
		83      &       62333   &5\\
		89      &       71645   &8\\
		95      &       81605   &8\\
		101     &       92213   &14\\
		107     &       103469  &13\\
		113     &       115373  &8\\
		137     &       169469  &20\\
		143     &       184613  &14\\
		149     &       200405  &16\\
		155     &       216845  &12\\
%		161     &       233933  &12\\
%		167     &       251669  &16\\
%		173     &       270053  &18\\
%		179     &       289085  &16\\
%		185     &       308765  &16\\
%		191     &       329093  &15\\
%		197     &       350069  &18\\
		\hline
	\end{tabular}
}
	\end{table}

\newpage

\section*{Appendix}
Her we compute $S^3(2q-1,3q)$ using Lemma \ref{L4}.
\begin{multline}\label{P6E1}
S^3(2q-1,3q)=\sum_{j=0}^{3q-1}P_1(j/c)P_3((2q-1)j/c)\\
=\sum_{k=0}^{q-1}P_1((3k)/c)P_3((2q-1)(3k)/c)+\sum_{k=0}^{q-1}P_1((3k+1)/c)P_3((2q-1)(3k+1)/c)\\+\sum_{k=0}^{q-1}P_1((3k+2)/c)P_3((2q-1)(3k+2)/c)
\end{multline}
Since $q \equiv 1 \pmod 3$, we observe that
\begin{multline*}
\{(3k)(2q-1)/3q\}=(3q-3k)/3q \mbox{ for } k>0, \quad \{(3k+1)(2q-1)/3q\}=(-3k+2q-1)/3q \quad\\
 \mbox{ for }k \leq (2q-2)/3,
 \{(3k+1)(2q-1)/3q\}=(-3k+5q-1)/3q \mbox{ for }k \geq (2q+1)/3,\\
  \{(3k+2)(2q-1)/3q\}=(-3k+q-2)/3q \mbox{ for } k \leq (q-4)/3\\
 \mbox{ and } \{(3k+2)(2q-1)/3q\}=(-3k+4q-2)/3q \mbox{ for } k \geq (q-1)/3.
\end{multline*}
Recall that $P_i(x)=P_i(\{x\})$ for $i=1,2,3$ and $x \in \mathbb{R}$. Hence we obtain
\begin{multline}\label{ES2}
S^3(2q-1,3q)=\sum_{k=0}^{q-1} P_1\left(\frac{3k}{3q}\right)P_3\left(\frac{3q-3k}{3q}\right)+\sum_{k=0}^{\frac{2q-2}{3}}P_1\left(\frac{3k+1}{3q}\right)P_3\left(\frac{-3k+2q-1}{3q}\right)\\
+\sum_{k=\frac{2q+1}{3}}^{q-1}P_1\left(\frac{3k+1}{3q}\right)P_3\left(\frac{-3k+5q-1}{3q}\right)+\sum_{k=0}^{\frac{q-4}{3}}P_1\left(\frac{3k+2}{3q}\right)P_3\left(\frac{-3k+q-2}{3q}\right)\\
+\sum_{k=\frac{q-1}{3}}^{q-1}P_1\left(\frac{3k+2}{3q}\right)P_3\left(\frac{-3k+4q-2}{3q}\right).\\
\end{multline}
Now, we evaluate each sum individually.
\begin{equation*}
\begin{split}
\begin{aligned}[b]
\sum_{k=0}^{q-1} P_1\left(\frac{3k}{3q}\right)P_3\left(\frac{3q-3k}{3q}\right)
&=\sum_{k=0}^{q-1}\left(\frac{-3q+6k}{6q}\right) \left( \frac{-q^2k+3qk^2-2k^3}{2q^3}\right)\\
&=\sum_{k=0}^{q-1}\left( \frac{q^3k-5q^2k^2+8qk^3-4k^4}{4q^4}\right).
\end{aligned}
\end{split}
\end{equation*}
Now we use the formula for sum of fixed powers of consecutive numbers to obtain
\begin{equation}\label{ES21}
\sum_{k=0}^{q-1} P_1\left(\frac{3k}{3q}\right)P_3\left(\frac{3q-3k}{3q}\right)=\frac{q^4-5q^2+4}{120q^3}.
\end{equation}
Next, we have
\begin{equation*}
\begin{split}
\begin{aligned}[b]
&\sum_{k=0}^{\frac{2q-2}{3}}P_1\left(\frac{3k+1}{3q}\right)P_3\left(\frac{-3k+2q-1}{3q}\right) \\
&=\sum_{k=0}^{\frac{2q-2}{3}}   \left(\frac{6k+2-3q}{6q}\right)  \times \\ 
&\left( \frac{-54k^3+(27q-54)k^2+(9q^2+18q-18)k+(-2q^3+3q^2+3q-2)}{54q^3}\right) 
\\
&=\sum_{k=0}^{\frac{2q-2}{3}}\left( \frac{-324k^4+(324q-432)k^3+(-27q^2+324q-216)k^2}{324q^4}\right)\\
&+\left( \frac{(-39q^3-18q^2+108q-48)k+(6q^4-13q^3-3q^2+12q-4)}{324q^4}\right) 
\end{aligned}
\end{split}
\end{equation*}
Using the formula for sum of fixed powers of consecutive numbers we obtain
\begin{equation}\label{ES22}
\sum_{k=0}^{\frac{2q-2}{3}} P_1\left(\frac{3k+1}{3q}\right)P_3\left(\frac{-3k+2q-1}{3q}\right)=\frac{2q^5+15q^4-5q^3+50q^2-52q-40}{4860q^4}.
\end{equation}
Now we evaluate the sum
\begin{equation*}
\begin{split}
\begin{aligned}[b]
&\sum_{k=\frac{2q+1}{3}}^{q-1}P_1\left(\frac{3k+1}{3q}\right)P_3\left(\frac{-3k+5q-1}{3q}\right)\\
&=\sum_{k=\frac{2q+1}{3}}^{q-1}\left(\frac{6k+2-3q}{6q}\right) \times \\
& \left( \frac{-54k^3+(189q-54)k^2+(-207q^2+126q-18)k+(70q^3-69q^2+21q-2)}{54q^3}\right)\\
& =\sum_{k=\frac{2q+1}{3}}^{q-1}\left( \frac{-324k^4+(1296q-432)k^3+(-1809q^2+1296q-216)k^2}{324q^4}\right) \qquad\\+
& \left( \frac{(1041q^3-1206q^2+432q-48)k+(-210q^4+347q^3-201q^2+48q-4)}{324q^4}\right).
\end{aligned}
\end{split}
\end{equation*}
From the formulas for sum of fixed powers of consecutive numbers we get
\begin{equation}\label{ES23}
\sum_{k=\frac{2q+1}{3}}^{q-1}P_1\left(\frac{3k+1}{3q}\right)P_3\left(\frac{-3k+5q-1}{3q}\right)=\frac{-43q^5+30q^4-35q^3+20q^2-52q+80}{9720q^4}.
\end{equation}
Next we look at the sum
\begin{equation*}
\begin{split}
\begin{aligned}[b]
&\sum_{k=0}^{\frac{q-4}{3}}P_1\left(\frac{3k+2}{3q}\right)P_3\left(\frac{-3k+q-2}{3q}\right)\\
&=\sum_{k=0}^{\frac{q-4}{3}}\left( \frac{6k+4-3q}{6q}\right) \times \\
 &\left( \frac{-54k^3+(-27q-108)k^2+(9q^2-36q-72)k+(2q^3+6q^2-12q-16)}{54q^3} \right)\\
&=\sum_{k=0}^{\frac{q-4}{3}}  \left( \frac{-324k^4-864k^3+(135q^2-864)k^2}{324q^4}\right) \quad\\
&+\left(\frac{(-15q^3+180q^2-384)k+(-6q^4-10q^3+60q^2-64)}{324q^4}\right)
\end{aligned}
\end{split}
\end{equation*}
Using the formula for sum of fixed powers of consecutive numbers we get
\begin{equation}\label{ES24}
\sum_{k=0}^{\frac{q-4}{3}}P_1\left(\frac{3k+2}{3q}\right)P_3\left(\frac{-3k+q-2}{3q}\right)=\frac{-43q^5+30q^4-35q^3+20q^2-52q+80}{9720 q^4}.
\end{equation}
The last sum to be handled is
\begin{equation*}
\begin{split}
\begin{aligned}[b]
&\sum_{k=\frac{q-1}{3}}^{q-1}P_1\left(\frac{3k+2}{3q}\right)P_3\left(\frac{-3k+4q-2}{3q}\right)\\
&=\sum_{k=\frac{q-1}{3}}^{q-1}  \left( \frac{6k+4-3q}{6q}\right) \times \\ 
 &\left( \frac{-54k^3+(135q-108)k^2+(-99q^2+180q-72)k+(20q^3-66q^2+60q-16)}{54q^3} \right)\\
&=\sum_{k=\frac{q-1}{3}}^{q-1}\left( \frac{-324k^4+(972q-864)k^3+(-999q^2+1944q-864)k^2}{324q^4}\right) \qquad \qquad \qquad \qquad \qquad \\
&+\left(\frac{(417q^3-1332q^2+1296q-384)k+(278q^3-44q^2+288q-60q^4-64)}{324q^4}\right). \quad 
\end{aligned}
\end{split}
\end{equation*}
This leads to
\begin{equation}\label{ES25}
\sum_{k=\frac{q-1}{3}}^{q-1}P_1\left(\frac{3k+2}{3q}\right)P_3\left(\frac{-3k+4q-2}{3q}\right)=\frac{2q^5+15q^4-5q^3+50q^2-52q-40}{4860q^4}.
\end{equation}
Using equations (\ref{ES21}), (\ref{ES22}), (\ref{ES23}), (\ref{ES24}) and (\ref{ES25}) in equation (\ref{ES2}) we get
$$S^3(2q-1,3q)=\frac{q^4+40q^3-165q^2+80q+4}{3240q^3}.$$

%\noindent\textbf{Acknowledgements.}
%The second author would like to appreciate the hospitality provided by Harish-Chandra Research Institute, Allahabad and the discussions with algebraic number theory group there.% Also we are very much thankful to the anonymous referee.


\begin{thebibliography}
{100}
%\bibitem{AC55} N. C. Ankeny and S. Chowla, {\it On the divisibility of the class number of quadratic fields}, Pacific J. Math. {\bf 5} (1955), 321--324.

\bibitem{AB} A. Baker, Linear forms in the logarithms of algebraic numbers. I, II, III, {\em Mathematika} {\bf 13} (1966), 204-216; ibid. {\bf 14} (1967), 102-107; ibid {\bf 14} (1967), 220-228.

\bibitem{B1} A. Biro, Yokoi's conjecture, {\em Acta Arith.} {\bf 106} (2003), no. 1, 85-104.

\bibitem{B2} A. Biro, Chowla's conjecture, {\em Acta Arith.} {\bf 107} (2003), no. 2, 179-194.

\bibitem{BL} A. Biro, K. Lapkova The class number one problem for real quadratic fields $\mathbb{Q}(\sqrt{(an)^2+4a})$, {\em Acta Arith.} {\bf 172} (2016), no. 2, 117-131.

%\bibitem{BS01} Y. Bugeaud and T. N. Shorey, {\it On the number of solutions of the generalized Ramanujan-Nagell equation}, J. Reine Angew. Math. {\bf 539} (2001), 55--74.

\bibitem{BK1} D. Byeon, H. K. Kim, Class number 1 criteria for real quadratic fields of Richaud-Degert type, {\em J. Number Theory}, {\bf 57} (1996), no. 2, 328-339.

\bibitem{BK2}
	D. Byeon and H. K. Kim, Class number 2 criteria for real quadratic fields of Richaud-Degert type, {\em J. Number Theory} {\bf 62} (1997), 257--272.
	
 \bibitem{KAM}
    K. Chakraborty, A. Hoque and M. Mishra, A note on certain real quadratic fields with class number up to three, {\em Kyushu Journal of Mathematics} (To appear).	

%\bibitem{CL84} H. Cohen and H. W. Lenstra Jr., {\it Heuristics on class groups of number fields}, in: Number theory, Noordwijkerhout 1983, 33--62, 
%Lecture Notes in Mathematics, 1068, Springer, Berlin, 1984.

%\bibitem{Cohn1} J. H. E. Cohn, {\it Square Fibonacci numbers, etc.}, The Fibonacci Quarterly {\bf 2.2} (1964) 109--113.

%\bibitem{Cohn} J. H. E. Cohn, {\it On the class number of certain imaginary quadratic fields}, Proc. Amer. Math. Soc. {\bf 130} (2001), 1275--1277.

%\bibitem{CO03} J. H. E. Cohn, {\it On the Diophantine equation $x^n=Dy^2+1$}, Acta Arith. {\bf 106} (2003), 73--83.

\bibitem{GD} G. Degert, Uber die Bestimmung der Grundeinheit gewisser reell-quadratischer Zahlkorper, {\em Abh. Math. Sem. Univ. Hamburg} {\bf 22} (1958), 92-97.

%\bibitem{CM02} K. Chakraborty and M. R. Murty, {\it On the number of real quadratic fields with class number divisible by $3$}, Proc. Amer. Math. Soc., {\bf 131} (2003), 41-44.

%\bibitem{GR01} B. H. Gross and D. H. Rohrlich, {\it Some results on the Mordell-Weil group of the Jacobian of the Fermat curve}, Invent. Math. {\bf 44} (1978), 201--224.

%\bibitem{ES} J. H. Evertse and J. H. Silverman, {\it Uniform bounds for the number of solutions to $Y^n=f(X)$}, Math. Proc. Camb. Phil. Soc. {\bf 100} (1986), 237--248.

\bibitem{CFG}
	C. F. Gauss, {\em Disquisitiones arithmaticae.} Translated into English by Arthur A. Clarke, S. J. Yale University Press, New Haven, Conn., London.

\bibitem{HH}
    H. Hasse, \"Uber mehrklassige, aber eigen schlechtige reell-quadratische Zahlk\:orper, {\em Elem. Math.} {\bf 20} (1965), 49--59.

%\bibitem{HS15} A. Hoque and H. K. Saikia, {\it A family of imaginary quadratic fields whose class numbers are multiples of three}, J. Taibah Univ. Sci., {\bf 9} (2015), 399-402.

%\bibitem{HS16} A. Hoque and H. K. Saikia,
%{\it On the divisibility of class numbers of quadratic fields and the solvability of diophantine equations}, SeMA J. {\bf 73} (2016), 213--217.

%\bibitem{HS152} A. Hoque and H. K. Saikia {\it On generalized Mersenne primes and class-numbers of equivalent quadratic fields and cyclotomic fields}, SeMA J., {\bf 67} (2015), 71-75.

%\bibitem{IS11} K. Ishii, {\it On the divisibility of the class number of imaginary quadratic fields}, Proc. Japan Acad. {\bf 87}, Ser. A (2011), 142--143.

%\bibitem{IT11p} A. Ito, {\it A note on the divisibility of class numbers of imaginary quadratic fields $\mathbb{Q}(\sqrt{a^2-k^n})$}, Proc. Japan Acad. {\bf 87}, Ser. A (2011), 151--155.

%\bibitem{IT11} A. Ito, {\it Remarks on the divisibility of the class numbers of imaginary quadratic fields $\mathbb{Q}(\sqrt{2^{2k}-q^n})$}, Glasgow Math. J. {\bf 53} (2011), 379--389.

%\bibitem{IT15} A. Ito, {\it Notes on the divisibility of the class numbers of imaginary quadratic fields $\mathbb{Q}(\sqrt{3^{2e}-4k^n})$}, Abh. Math. Semin. Univ. Hamburg {\bf 85} (2015), 1--21.

%\bibitem{KI00} Y. Kishi, {\it A constructive approach to Spiegelung relations between 3-Ranks of absolute ideal class groups and congruent ones modulo $(3)^2$ in quadratic fields},
%J. Number Theory, {\bf 83} (2000), 1-49.

%\bibitem{KM00} Y. Kishi and K. Miyake, {\it parameterization of the quadratic fields whose class numbers are divisible by three}, J. Number Theory, {\bf 80} (2000), 209-217.

%\bibitem{KI09} Y. Kishi, {\it Note on the divisibility of the class number of certain imaginary quadratic fields}, Glasgow Math. J. {\bf 51} (2009), 187--191;
corrigendum, ibid. {\bf 52} (2010), 207--208.

\bibitem{KKST}
	 F. Kawamoto, Y. Kishi, H. Suzuki, K. Tomita, Real quadratic fields, continued fractions, and a construction of primary symmetric parts of ELE type, {\em Kyushu J. Math.} {\bf 73} (2019), 165-187.

\bibitem{HK}
    H. K. Kim, "A Conjecture of S. Chowla and Related Topics in Analytic Number Theory", Thesis, The Johns Hopkins Univ., Baltimore, 1988.

 \bibitem{HL}
    H. Lang, \"Uber eine Gattung elementar-arithmetischer Klassen invarianten reell-quadratischer Zahlk\"orper, {\em Angew. Math.} {\bf 223} (1968), 123--175.
    
\bibitem{JL} Lee, Jungyun, The complete determination of wide Richaud-Degert types which are not $5$ modulo $8$ with class number one, {\em Acta Arith.} {\bf 140} (2009), no. 1, 1-29.    

%\bibitem{KI10} Y. Kishi, {\it On the ideal class group of certain quadratic fields}, Glasgow Math. J., {\bf 52} (2010), 575-581. 

%\bibitem{LE50} V. A. Lebesgue, {\it Sur l'impossibilit\'{e} en nombres entiers de l'\'{e}quation $x^m = y^2 + 1$}, Nouvelles Annales des Math\'{e}matiques, {\bf 1} (1850), 178-181.

%\bibitem{LJ43} W. Ljunggren, {\it Some theorems on indeterminate equations of the form $\frac{x^n-1}{x-1}=y^q$}, Norsk Mat. Tidsskr. {\bf 25} (1943), 17--20.

%\bibitem{LO09} S. R. Louboutin, {\it On the divisibility of the class number of imaginary quadratic number fields}, Proc. Amer. Math. Soc. {\bf 137} (2009), 4025--4028.

\bibitem{SL}
	 S. R. Louboutin, Continued fractions and real quadratic field, {\em J. Number Theory} {\bf 30} (1988), 167-176.

\bibitem{RM}
	R. A. Mollin, Class number one criteria for real quadratic fields, {\em Proc. Japan Acad. Ser. A Math. Sci.} {\bf 63} 1987.

\bibitem{RM1}
	 R. A. Mollin, A simple criterion for solvability of both $x^2-dy^2=c$ and $x^2-dy^2=-c$, {\em New York J. Math.} {\bf 7} (2001), 87-97.

%\bibitem{RM97}M. R. Murty, {\it The ABC conjecture and exponents of class groups of quadratic fields}, Contemporary Math. {\bf 210} (1998), 85--95.

%\bibitem{RM99}M. R. Murty, {\it Exponents of class groups of quadratic fields}, Topics in number theory, Math. Appl. {\bf 467} Kluwer Acad. Publ., Dordrecht 1999, 229-239.

%\bibitem{NA22} T. Nagell, {\it \"{U}ber die Klassenzahl imagin\"{a}r quadratischer, Z\"{a}hlk\"{o}rper}, Abh. Math. Sem. Univ. Hamburg {\bf 1} (1922), 140--150.

%\bibitem{NA55} T. Nagell, {\it Contributions to the theory of a category of Diophantine equations of the second degree with two unknowns}, Nove Acta Regiae Soc. Sci. Uppsal. (4), {\bf 16} (1955), 1--38.

%\bibitem{CLS1} C. L. Siegel (under the pseudonym X). {\it The integer solutions of the equation $y^2=ax^n+bx^{n-1}+ \ldots +h$,} Gesammelte Abhandlungen, {\bf vol. 1} (Springer-Verlag, 1966), 207-208.


%\bibitem{CLS2} C. L. Siegel, {\it Uber einige Anwendungen diophantischer Approximationen (1929),} Gesammelte Abhandlungen, {\bf vol. 1} (Springer-Verlag, 1966), 209-266.

%\bibitem{WS} W. M. Schmidt, Equations over finite fields, An elementary approach,
%Lecture Notes in Mathematics, 536, Springer-Verlag, Berlin-New York, 1976.

\bibitem{CS}
	C. L. Siegel, Berechnung von Zetafunktionen an ganzzahligen Stellen, {\em Nachr. Akad. Wiss. G\"ottingen, Math.-Phys. Klasse} {\bf 10} (1969), 87--102.

%\bibitem{SO00} K. Soundararajan, {\it Divisibility of class numbers of imaginary quadratic fields}, J. London Math. Soc. {\bf 61} (2000), 681--690.

\bibitem{HMS} H. M. Stark,  A complete determination of the complex quadratic fields of class number one, {\em Michigan Math. J.} {\bf 14} (1967), 1-27.

%\bibitem{Th09} A. Thue, {\it \"{U}ber Ann\"{a}herungswerte algebraischerzehlen}, J. Reine Angew. Math. {\bf 135} (1909), 284--305.

%\bibitem{WE73} P. J. Weinberger, {\it Real Quadratic fields with Class number divisible by $n$}, J. Number theory {\bf 5} (1973), 237--241.

%\bibitem{YA70} Y. Yamamoto, {\it On unramified Galois extensions of quadratic number fields}, Osaka J. Math. {\bf 7} (1970), 57--76.

\bibitem{HY}
	H. Yokoi, On the Fundamental Unit of Real Quadratic Fields with Norm 1, {\em J. Number Theory} {\bf 2} (1970), 106--115.

\bibitem{HY1} H. Yokoi, On real quadratic fields containing units with norm $-1$, {\em Nagoya Math. J.} {\bf Vol. 33} (1968), 139-152.

\bibitem{DZ}
	D. B. Zagier, On the values at negative integers of the zeta function of a real quadratic field, {\em Enseig. Math.} {\bf 19} (1976), 55--95.

%\bibitem{MI12} M. Zhu and T. Wang, {\it The divisibility of the class number of the imaginary quadratic field $\mathbb{Q}(\sqrt{2^{2m}-k^n})$}, Glasgow Math. J. {\bf 54} (2012),
%149--154.

%\bibitem{IC03} H. Ichimura, {\it Note on the class numbers of certain real quadratic fields}, Abh. Math. Sem. Univ. Hamburg, {\bf 73} (2003), 281-288.
\end{thebibliography}
\end{document}